\documentclass[12pt]{amsart}
\usepackage[utf8]{inputenc}
\usepackage{amsmath,amssymb,mathrsfs, amsfonts, amsthm}

\usepackage{comment}
\usepackage{subfig}
\usepackage{graphicx}
\usepackage{enumitem}
\usepackage{yfonts}
\usepackage[margin=1in]{geometry}
\usepackage{color}

\usepackage{url}
\usepackage{framed}
\usepackage{enumitem}
\usepackage{multirow}
\usepackage{setspace}
\usepackage{tikz}
\usetikzlibrary{decorations.pathreplacing}

\providecommand{\U}[1]{\protect\rule{.1in}{.1in}}
\allowdisplaybreaks[3]
\newcommand{\N}{\mathbb{N}}
\newcommand{\R}{\mathbb{R}}
\newcommand{\C}{\mathbb{C}}
\newcommand{\Z}{\mathbb{Z}}
\newcommand{\Q}{\mathbb{Q}}

\newcommand{\cG}{\mathcal{G}}

\newtheorem{thm}{Theorem}
\newtheorem{lemma}{Lemma}
\newtheorem{prop}{Proposition}
\newtheorem{cor}{Corollary}
\newtheorem{rem}{Remark}

\newtheorem{conjecture}{Conjecture}

\title{The HRT Conjecture for Symmetric Configurations and Real-Valued Functions}
\date{\today}

\author{Shuang Guan}
\address{Department of Mathematics, Tufts University, Medford MA 02155, USA}
\email{shuang.guan@tufts.edu}
\author{Kasso A.~Okoudjou}
\address{Department of Mathematics, Tufts University, Medford MA 02155, USA}
\email{kasso.okoudjou@tufts.edu}

\begin{document}
\begin{abstract}

The Heil–Ramanathan–Topiwala (HRT) conjecture asserts that every finite collection of distinct time-frequency shifts of a nonzero square-integrable function is linearly independent. Despite its simple formulation, the conjecture remains open even under strong regularity and decay assumptions on the generating function, and in particular for general configurations of four distinct points.

In this paper, we establish the HRT conjecture for an infinite family of symmetric $(2n+1,2)$ configurations and arbitrary functions in $L^2(\R)$. More generally, our argument applies whenever the collinear points have commensurable spacings. As a consequence, we prove the HRT conjecture for every configuration of four distinct points when the generating function is real-valued. The proof combines a reduction to products of trigonometric polynomials with estimates along orbits of irrational rotations.

\end{abstract}

\subjclass[2020]{Primary 42C15 Secondary 42C40, 37A30}
\keywords{HRT conjecture, time-frequency analysis}

\maketitle

\section{Introduction}\label{sec:intro}
The study of linear independence of time-frequency shifts is a fundamental problem in time-frequency analysis and has deep connections to frame theory and Gabor analysis. Given $a,b \in \R$ and a function $g$ in $L^2(\R)$, define the modulation and translation operators 
$$ M_bg(t) = e^{2 \pi  ibt} g(t)\qquad  
T_ag(t)= g(t-a).$$   Given a function $g \in L^2(\R)$ and a finite set of distinct points $\Lambda = \{ (a_k ,b_k) \}_{k=1}^N \subset \R^2$, the associated finite Gabor (Weyl-Heisenberg) system $\cG$ is defined as:
$$\cG(g,\Lambda) = \{ (e^{2\pi i b_k \cdot}g(\cdot - a_k))\}_{k=1}^N = \{ M_{b_k}T_{a_k}g\}_{k=1}^N.$$

The HRT Conjecture, named after Heil, Ramanathan and Topiwala \cite{heil1996linear}, has remained one of the central open problems in time-frequency analysis for three decades.
\begin{conjecture}\label{HRTConjecture}
    Given any $0 \neq g \in L^2(\R)$ and $\Lambda = \{ (a_k ,b_k) \}_{k=1}^N \subset \R^2$, $\cG(g,\Lambda)$ is a linearly independent set in $L^2(\R)$.
\end{conjecture}

A central theme of the HRT conjecture is the interplay between the geometry of finite point configurations in the time-frequency plane and the linear independence of the corresponding finite Gabor systems. We refer to \cite{Groc2001} for a general introduction to Gabor analysis.

Although the conjecture has a remarkably simple formulation, it has resisted every general approach developed so far. Existing partial results typically rely on one of two complementary strategies. The first imposes assumptions on the generating function, such as compact support, decay, positivity, or smoothness. The second exploits geometric or arithmetic properties of the underlying configuration. We recall several representative cases below and refer to
\cite{heil2006linear,heil2015hrt} for comprehensive surveys.

\begin{prop}\label{KnowResults}
Let $0\neq g\in L^2(\R)$ and let $\Lambda\subset\R^2$ be finite. The HRT
conjecture holds in each of the following cases.
\begin{enumerate}[label=(\roman*)]
    \item The function $g$ is compactly supported, or is supported in one of the
    half-lines $(-\infty,0]$ and $[0,\infty)$ \cite{heil1996linear}.

    \item The function has the form $g(t)=p(t)e^{-\pi t^2}$, where $p$ is a polynomial \cite{heil1996linear}.

    \item The function $g$ satisfies either
    $|g(t)|e^{ct^2}\longrightarrow 0
        \quad\text{as }t\to\infty$
    for every $c>0$, or
    $|g(t)|e^{ct\log t}\longrightarrow 0
        \quad\text{as }t\to\infty $  for every $c>0$ \cite{bownik2013linear}.
    \item The function $g$ is continuous, and time coordinates of points in $\Lambda$ are far apart relative to the decay of $g$. \cite{kreisel2019linear}
    \item The function $g$ is ultimately positive, and the frequency parameters
    of $\Lambda=\{(a_k,b_k)\}_{k=1}^N$ are linearly independent over $\Q$
    \cite{BeBo13}.

    \item The function $g$ is ultimately positive, both $g(x)$ and $g(-x)$ are
    ultimately decreasing, and $\#\Lambda=4$ \cite{BeBo13}.

    \item The configuration is contained in a translate of a lattice,
    $\Lambda\subset A(\Z^2)+z$,
    where $A$ is a nonsingular $2\times2$ matrix and $z\in\R^2$
    \cite{linnell1999neumann}. In particular, the conjecture holds whenever
    $\#\Lambda\leq3$.

    \item The configuration consists of four points lying on two parallel lines,
    with two points on each line \cite{demeter2010linear,demeter2012proof}.

    \item The configuration is an $(1,3)$ configuration belonging to the
    full-measure class obtained in \cite{liu2019proof}.
\end{enumerate}
\end{prop}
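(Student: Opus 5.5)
Proposition~\ref{KnowResults} collects results from the literature, so the plan is to verify that each item is exactly what the cited reference proves. I would also record the short argument behind each case, since several of these mechanisms reappear later. First I would use the standard reductions. The symplectic (metaplectic) covariance of time-frequency shifts means that if $\mathcal{U}$ is a metaplectic operator associated with $A\in SL(2,\R)$, then $\mathcal{U}M_bT_a\mathcal{U}^{-1}$ is a unimodular multiple of the time-frequency shift at $A(a,b)$. Together with translating $\Lambda$ by conjugation with a single time-frequency shift, this shows that linear independence of $\cG(g,\Lambda)$ is equivalent to that of $\cG(\mathcal{U}g,A\Lambda+z)$. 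This lets one normalize configurations; for example, all collinear configurations can be rotated onto the time axis.

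For items (i)--(iv), the mechanism is a decay or support argument. Suppose $\sum_k c_k e^{2\pi i b_k t}g(t-a_k)=0$, and group the terms by distinct time parameters $a_k$.
\begin{itemize}
\item In case (i), suppose $g$ is supported in $[0,\infty)$ and $a_1<\dots$ is the smallest time shift. On the interval $[a_1,\min_{a_k>a_1}a_k)$ only the terms with $a_k=a_1$ survive. This gives a nontrivial trigonometric polynomial times $g(t-a_1)$ vanishing on a set where $g$ is not a.e.\ zero, which is a contradiction. One then peels off the terms inductively. The compactly supported case is the same argument.
\item In case (ii), the identity becomes a polynomial times exponentials of the form $e^{-\pi(t-a_k)^2}e^{2\pi i b_k t}$, and comparing growth rates as $t\to\infty$ gives the contradiction.
\item Cases (iii) and (iv) are quantitative versions of this leading-term isolation, taken from \cite{bownik2013linear,kreisel2019linear}.
\end{itemize}

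Items (v) and (vi) rely on positivity. With $g$ ultimately positive, one divides by the dominant translate and applies Kronecker/Weyl equidistribution to the frequencies, as in \cite{BeBo13}. Items (viii) and (ix) are the deeper results of \cite{demeter2010linear,demeter2012proof,liu2019proof}; I would cite them directly.

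For item (vii), I would invoke Linnell's theorem \cite{linnell1999neumann}. After a metaplectic normalization, the operators $M_bT_a$ with $(a,b)\in A\Z^2$ generate a representation of a discrete Heisenberg-type group. A dependency $\sum c_k\pi(\lambda_k)g=0$ means that a nonzero element of the group ring annihilates a nonzero vector of $L^2$. Linnell's result on zero divisors in the associated group von Neumann algebra (the Atiyah-type property for elementary amenable-by-free groups) rules this out.

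The ``in particular'' clause needs a small additional argument, and this is the step I expect to require care. After translating one point to the origin, three non-collinear points lie in $A\Z^2$, where $A$ has the two difference vectors as columns. Three collinear points need not lie in any lattice, because their spacings may be incommensurable. For that case I would instead rotate the line onto the frequency axis. Taking the Fourier transform (or simply working directly on that axis), a relation $\sum_k c_k e^{2\pi i b_k t}g(t)=0$ forces the nonzero trigonometric polynomial $\sum_k c_k e^{2\pi i b_k t}$ to vanish on the support of $g$. This is impossible, because such a polynomial has a discrete zero set. The same argument handles any collinear configuration. Configurations with one or two points are immediate.
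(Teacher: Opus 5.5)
Your proposal is correct and agrees with the paper, which gives no argument for this proposition beyond the citations. Your added treatment of the $\#\Lambda\le3$ clause is a genuine improvement: three collinear points with incommensurable spacings lie in no translate of a lattice, so that clause does not follow from (vii) alone and needs exactly the collinear argument you supply (move the line onto the frequency axis, then use that a nonzero trigonometric polynomial with distinct real frequencies vanishes only on a null set), and two small wording points should be fixed — a translation of $\Lambda$ comes from applying $M_\beta T_\alpha$ to every element of the system (conjugation only produces phases), and in (i) you should first translate $g$ so that $0$ is the left endpoint of its essential support, since otherwise $g(\cdot-a_1)$ may vanish a.e.\ on $[a_1,\min_{a_k>a_1}a_k)$.
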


Following \cite{demeter2010linear}, an $(m,n)$ -configuration is a set of $m+n$ distinct points in the plane lying on two parallel lines, with $m$ points on one line and $n$ on the other. These configurations have played a prominent role in the study of the HRT conjecture, beginning with Demeter's proof of the $(2,2)$ case and Liu's subsequent work on almost every $(1,3)$-configuration.

Among finite configurations, the case of four points occupies a particularly important position. Linnell's result \cite[Proposition 1.3]{linnell1999neumann} implies the conjecture whenever three or fewer points are involved, while arbitrary four-point configurations remain open. The $(2,2)$ and $(1,3)$ configurations represent two important classes of four-point configurations for which the HRT conjecture is now  known \cite{demeter2010linear,demeter2012proof, liu2019proof}. Nevertheless, the general four-point problem remains open.

We also mention the recent work of Oussa \cite{oussa2026leancertifiedfourpointhrtresults}, which provides a Lean formalization of several four-point results for the HRT conjecture. Additional recent progress on the conjecture can be found in \cite{currey2020translates,Antezana2020LpHRT,enstad2025linear}.

Motivated by the four-point problem, the second named author introduced in \cite{okoudjou2019extension} two general principles for studying the conjecture. The \emph{restriction principle} shows that establishing the HRT conjecture for a configuration of $N+1$ points automatically implies the result for certain related configurations of $N$ points. Conversely, the \emph{extension principle} asks when one may enlarge a configuration by adding a new point while preserving linear independence. These principles suggest that understanding special geometric configurations may lead to broader classes of configurations.

Using this perspective, the second named author obtained the following partial result for symmetric $(3,2)$ configurations \cite{okoudjou2019extension}. The three cases not covered by this theorem constitute the starting point of the present work.

 \begin{prop}\cite[Theorem 6]{okoudjou2019extension}\label{ExistingRestriction}
     Let $0 \neq g \in L^2(\R)$. Suppose $\Lambda$ is a $(3,2)$ configuration given by $\Lambda = \{ (0,0), (0,1), (0,-1), (a,b), (a,-b)\}$ where $b \neq 0$. Then, Conjecture \ref{HRTConjecture} holds for $\Lambda$ and $g$ whenever any of the following conditions hold:
     \begin{enumerate}[label=(\roman*)]
         \item $a,b \in \Q$.
        \item $a \in \Q$ but $b \notin \Q$.
        \item $a,b \notin \Q$ but $ab \in \Q$, and $g$ is a real-valued function.
     \end{enumerate}
 \end{prop}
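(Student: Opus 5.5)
We argue by contradiction. Suppose
$$c_0 g+c_1M_1g+c_{-1}M_{-1}g+d_1M_bT_ag+d_2M_{-b}T_ag=0$$
with not all coefficients zero. Put $P(x)=c_0+c_1e^{2\pi i x}+c_{-1}e^{-2\pi i x}$ and $Q(x)=d_1e^{2\pi i bx}+d_2e^{-2\pi i bx}$. The relation then reads $P(x)g(x)=-Q(x)g(x-a)$ a.e.

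First we dispose of degenerate cases. If $a=0$, all five points lie on one line and the configuration sits in a lattice, so Proposition \ref{KnowResults}(vii) applies. If $d_1d_2=0$, or if $P$ is a monomial or $P\equiv 0$, at most four points survive. These are either collinear, or of the form $\{(0,0),(0,\pm1),(a,\pm b)\}$, which lies in the lattice generated by $(0,1)$ and $(a,\pm b)$. Again Proposition \ref{KnowResults}(vii) applies. Case (i) is also immediate from (vii), since all five points then lie in $\tfrac1q\Z^2$ for a suitable $q$.

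So we may assume $a\neq0$, $d_1d_2\neq0$, and $P,Q\not\equiv0$. Iterating the relation gives, for a.e. $x$ and $N\ge1$,
$$\log|g(x+Na)|-\log|g(x)|=\sum_{j=1}^{N}\Big(\log|P(x+ja)|-\log|Q(x+ja)|\Big)=:S_N(x),$$
and there is an analogous formula for negative $N$. Since $g\in L^2$, we have $\sum_{N\in\Z}|g(x+Na)|^2<\infty$ for a.e. $x\in[0,a)$. Hence on the positive-measure set where $g\neq0$, $S_N(x)\to-\infty$ as $N\to+\infty$ and $S_{-N}(x)\to+\infty$ in the backward direction. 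The contradiction will come from showing that the cocycle $S_N$ cannot do this on a set of positive measure.

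For case (ii), write $a=p/q$ and group $q$ consecutive steps. The $P$-part $\Phi(x)=\sum_{j=1}^q\log|P(x+jp/q)|$ is then invariant under $x\mapsto x+qa=x+p$. Also $|Q(y)|=|d_1e^{2\pi i(2b)y}+d_2|$ depends only on $2by \bmod 1$, and the step $x\mapsto x+qa$ rotates this variable by the irrational number $2bp$. Thus, for fixed $x$, $S_{qm}(x)$ is $m\Phi(x)$ minus a Birkhoff sum of the integrable function $\psi(t)=\sum_{j=1}^q\log|d_1e^{2\pi i(t+2bjp/q)}+d_2|$ over an irrational circle rotation. By Jensen's formula, $\int_0^1\psi=q\log\max(|d_1|,|d_2|)$.

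Put $\lambda(x)=\Phi(x)-q\log\max(|d_1|,|d_2|)$ and split according to its sign.
\begin{itemize}
\item Where $\lambda>0$, the ergodic theorem gives $S_{qm}\to+\infty$, contradicting the forward decay.
\item Where $\lambda<0$, the same happens backward.
\item On $\{\lambda=0\}$, the cocycle $\psi-\int\psi$ has mean zero. Atkinson's recurrence theorem for integrable mean-zero cocycles over ergodic transformations gives $\liminf_m|S_{qm}(x)|=0$ for a.e. $t$, contradicting $S_{qm}\to-\infty$.
\end{itemize}
This settles (ii). Handling $\{\lambda=0\}$ through Atkinson's theorem, instead of quantitative discrepancy bounds, is the point I expect to need care. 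The reason is that $\log|Q|$ has logarithmic singularities when $|d_1|=|d_2|$, so Denjoy--Koksma-type estimates are unavailable.

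For case (iii), reality of $g$ is used first. On a positive-measure set where $g(x)g(x-a)\neq0$, the ratio $-Q(x)/P(x)=g(x)/g(x-a)$ is real. Hence $P\overline{Q}$ is real there. This function is a real-analytic exponential sum with frequencies in $\{0,\pm1\}\pm b$, so it is real everywhere. Because $b\notin\Q$, these frequencies are pairwise distinct, and conjugate-symmetry of the coefficients forces explicit relations among $c_0,c_{\pm1},d_{1},d_2$; for instance $c_1\bar d_1=\overline{c_{-1}}d_2$ and $c_0\bar d_1=\bar c_0 d_2$. I will use these relations to simplify $|P|$ and $|Q|$.

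The cocycle now lives on the $2$-torus, driven by $(x \bmod 1,\,2bx \bmod 1)$ and rotated by $(a,2ab)$. Since $2ab\in\Q$ this rotation is not ergodic, but its ergodic components are circles on which it acts as an irrational rotation, because $a\notin\Q$. I will rerun the trichotomy of case (ii) on each component: Jensen-type integrals determine the drift, and Atkinson's theorem handles zero drift. The main obstacle I anticipate is computing the drift on these components. The components couple $P$ and $Q$ along a line of rational slope, so the integral of $\log|P|-\log|Q|$ along it is no longer a simple Jensen integral. My first attempt will be to use the coefficient relations above to factor $|P(x)|^2$ and $|Q(x)|^2$ on these lines into expressions whose line integrals Jensen's formula still evaluates.
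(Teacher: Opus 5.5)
Your arguments for (i) and (ii) are sound, apart from two slips that do not affect the logic.

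\begin{itemize}
\item Iterating $P(x)g(x)=-Q(x)g(x-a)$ gives $\log|g(x+Na)|-\log|g(x)|=\sum_{j=1}^{N}\bigl(\log|Q(x+ja)|-\log|P(x+ja)|\bigr)$. This is the negative of your $S_N$, so ``forward'' and ``backward'' are interchanged in your first two bullets.
\item For $a=0$ and $b\notin\Q$, the collinear points $(0,0),(0,\pm1),(0,\pm b)$ do not lie in a lattice. Collinear configurations are trivial for other reasons, and a $(3,2)$ configuration has $a\neq0$ anyway.
\end{itemize}

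The paper only cites this proposition. It does reprove (ii) in Case (4) of the proof of Theorem~\ref{main1}, and it reproves (iii) for arbitrary complex $g$ in Section~\ref{subsec:3.2}. In both places one factor is periodic, and the other is controlled by the Demeter--Zaharescu product bounds (Lemma~\ref{Sec3}) along convergent denominators. Forward and backward orbits are multiplied so that the drift cancels. Your route is genuinely different and softer: Birkhoff's theorem for nonzero drift, Atkinson's recurrence theorem for zero drift. It needs no Diophantine input and yields only $\liminf$ information, which suffices here.

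The gap is in (iii), which you leave unfinished; moreover, the obstacle you anticipate is not the real one. Your trichotomy never needs the value of the drift: positive drift contradicts decay in one direction, negative drift in the other, and zero drift is handled by Atkinson. What is missing is the right reduction.

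\begin{itemize}
\item Since $2ab=p/q$, the modulus $|Q(x+ja)|=|d_1e^{2\pi i(2bx+jp/q)}+d_2|$ depends only on $j \bmod q$.
\item Hence $\sum_{j=1}^{N}\log|Q(x+ja)|=\frac Nq\sum_{k=1}^{q}\log|Q(x+ka)|+O_x(1)$ for every $x$ whose orbit avoids the zeros of $Q$.
\item The only ergodic input left is the Birkhoff sum of $\log|P|$ under the irrational rotation $x\mapsto x+a$.
\item Set $\lambda(x)=\int_0^1\log|P|-\frac1q\sum_{k=1}^{q}\log|Q(x+ka)|$. Your argument for (ii) then runs verbatim for a.e.\ $x$, with Atkinson applied to $\log|P|-\int_0^1\log|P|$.
\end{itemize}

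This reduction also closes a measure-theoretic hole in your torus formulation. A.e.\ statements on $\mathbb{T}^2$, or on each ergodic component, do not by themselves apply to the one-parameter family of starting points $(x,2bx)$, since the component containing the starting point depends on $x$. In the reduced form, the exceptional null set comes from a single rotation and does not depend on $x$.

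Finally, neither the reality of $g$ nor the coefficient relations you derive from it are used, which is consistent with the paper's Section~\ref{subsec:3.2}. Computing the drift through Jensen-type integrals is therefore unnecessary.
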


As an application of the extension principle, \cite{KOVO21} studied configurations consisting of lattice points together with a single point outside the lattice. Although that work established several new cases of the HRT conjecture, the arguments relied on regularity and decay assumptions on the generating function and therefore did not apply to arbitrary functions in $L^2(\R)$.

The purpose of this paper is to show that symmetry in the configuration can be exploited in a different way. Our main observation is that symmetry forces the trigonometric polynomial arising in the linear dependence relation to factor into linear terms, allowing the product estimates of Demeter and Zaharescu \cite{demeter2012proof} to be applied to each factor separately. This leads to new infinite families of configurations for which the HRT conjecture holds for arbitrary functions in $L^2(\R)$.

More precisely, for each $n\geq1$, we consider the symmetric $(2n+1,2)$
configuration
\[
\Lambda_n=\{(0,k):-n\leq k\leq n\}\cup\{(a,b),(a,-b)\}, 
\]
illustrated in Figure~\ref{fig:symmetric-configuration}

\begin{figure}[ht]
\centering
\begin{tikzpicture}[
    scale=0.9,
    point/.style={circle,fill=black,inner sep=2.2pt}
]

\draw[gray] (0,-3.6) -- (0,3.6);
\draw[gray] (4,-3.6) -- (4,3.6);

\node[point] at (0,-3) {};
\node[point] at (0,-2) {};
\node at (0,-1.15) {$\vdots$};
\node[point] at (0,0) {};
\node at (0,1.15) {$\vdots$};
\node[point] at (0,2) {};
\node[point] at (0,3) {};

\node[right=5pt] at (0,-3) {$(0,-n)$};
\node[right=5pt] at (0,-2) {$(0,-n+1)$};
\node[right=5pt] at (0,0) {$(0,0)$};
\node[right=5pt] at (0,2) {$(0,n-1)$};
\node[right=5pt] at (0,3) {$(0,n)$};

\node[point] at (4,2.9) {};
\node[point] at (4,-2.9) {};

\node[left=5pt] at (4,3.3) {$(a,b)$};
\node[left=5pt] at (4,-3.3) {$(a,-b)$};

\draw[dashed] (-0.7,0) -- (8.7,0);
\node[right] at (8.7,0) {$y=0$};

\node[below] at (0,-3.6) {$x=0$};
\node[below] at (4,-3.6) {$x=a$};

\draw[
    decorate,
    decoration={brace,amplitude=5pt},
    xshift=-18pt
]
(0,-3.15) -- (0,3.15)
node[midway,left=18pt] {$2n+1$};

\draw[
    decorate,
    decoration={brace,amplitude=5pt,mirror},
    xshift=18pt
]
(4,-2.95) -- (4,2.95)
node[midway,right=18pt] {$2$ points};

\end{tikzpicture}
\caption{A symmetric $(2n+1,2)$ configuration.}
\label{fig:symmetric-configuration}
\end{figure}

 Our main theorem is stated as follows:

\begin{thm}\label{main1}
    Let $0\neq g \in L^2(\R)$  and let $n \geq 1$. Then
    Conjecture~\ref{HRTConjecture} holds for $\Lambda_n$ and $g$ for all
    $ab \neq 0$. More generally, the same conclusion holds with
    $\{(0,k)\}_{k=-n}^{n}$ replaced by $\{(0,k) : k \in F\}$ for any nonempty
    $F \subset \{-n, \dots, n\}$.
\end{thm}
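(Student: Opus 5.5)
We argue by contradiction. Suppose
\[
\sum_{k\in F}c_k e^{2\pi i k t}g(t)+d_1e^{2\pi i bt}g(t-a)+d_2e^{-2\pi i bt}g(t-a)=0
\]
almost everywhere with not all coefficients zero. Write $P(t)=\sum_{k\in F}c_ke^{2\pi ikt}$ and $Q(t)=d_1e^{2\pi ibt}+d_2e^{-2\pi ibt}$, so that $P(t)g(t)=-Q(t)g(t-a)$.

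\emph{Degenerate cases.} If $P\equiv0$ or $Q\equiv0$, then the other side is a nonzero trigonometric polynomial times $g$ or $T_ag$. Such a polynomial vanishes only on a null set, so $g=0$, a contradiction. Hence both are nonzero.

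\emph{Factoring.} Symmetry makes both moduli products of linear factors. Put $z=e^{2\pi it}$. Then
\[
|P(t)|=|c|\prod_{j=1}^{m}|e^{2\pi it}-\alpha_j|,\qquad m\le 2n,
\]
after removing a unimodular monomial. Also $|Q(t)|=|d_1e^{4\pi ibt}+d_2|$, a single linear factor in $e^{4\pi ibt}$. If $d_1d_2=0$, $|Q|$ is a nonzero constant and the problem is simpler.

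\emph{Iteration.} Iterating the relation gives, for $t$ in a fundamental domain $[0,|a|)$ and $N\ge1$,
\[
|g(t+Na)|=|g(t)|\prod_{j=1}^{N}\frac{|P(t+ja)|}{|Q(t+ja)|},
\]
and a similar formula for $t-Na$. Since $g\in L^2$, $\int_0^{|a|}\sum_{N\in\Z}|g(t+Na)|^2\,dt<\infty$. So it suffices to find a set $E$ of positive measure with $g\ne0$ on $E$, and infinitely many $N$, such that the products (forward or backward) stay bounded below on $E$.

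\emph{Products along orbits.} The numerator is a product over the rotation $x\mapsto x+a$ on $\R/\Z$, and the denominator a product over the rotation $x\mapsto x+2ab$ in the variable $2bt$. For a single factor $|e^{2\pi i(x+j\theta)}-\alpha|$ with $|\alpha|\neq1$, $\log|\cdot-\alpha|$ is smooth. For $\theta$ irrational, the Denjoy–Koksma inequality along continued fraction denominators $q_k$ gives
\[
\prod_{j\le q_k}=\max(1,|\alpha|)^{q_k}e^{O(1)}
\]
uniformly in $x$. For $\theta$ rational the product is exactly periodic and computable.

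\emph{Comparing Mahler measures.} By Jensen, the exponential growth rate of the numerator is $M(P)=|c|\prod\max(1,|\alpha_j|)$ and that of the denominator is $M(Q)$.
\begin{itemize}
\item If $M(P)>M(Q)$, the forward products grow exponentially on a set of positive measure, which contradicts integrability.
\item If $M(P)<M(Q)$, the same holds for the backward iteration.
\end{itemize}

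\emph{Equal Mahler measures: the main obstacle.} The hardest case is $M(P)=M(Q)$ with roots on the unit circle. There $\log|\cdot-\alpha|$ has a logarithmic singularity, so we invoke the Demeter–Zaharescu product estimates \cite{demeter2012proof} factor by factor. For each unimodular factor they give, along a subsequence of denominators $q_k$, lower bounds $\prod_{j\le q_k}|e^{2\pi i(x+j\theta)}-\alpha|\gtrsim 1$ outside sets of small measure. The linear factorization is exactly what lets us apply these one-factor estimates.

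The subtle point is coordinating the two rotations $a$ and $2ab$, which may be rationally independent. For the denominator (a single linear factor) we need an upper bound $\prod_{j\le N}|Q(t+ja)|/M(Q)^N\lesssim C$ on a large set. For the numerator we need lower bounds, at the same $N$ and on a common set of positive measure.

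I would first try the following. For each factor, the set of $(x,N)$ where the bound fails has measure $\to0$ along the respective good subsequence. Averaging in $t$ over $[0,|a|)$, use Fatou/Chebyshev-type bounds on $\log$ of the products (their $L^1$ norms in $x$ are $O(1)$ uniformly in $N$ by Demeter–Zaharescu). This yields infinitely many $N$ for which both bounds hold on a set of measure bounded below, which then meets $\{g\ne0\}$ in positive measure.

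\emph{Rational cases and general $F$.} When $a$ or $2ab$ is rational, periodicity replaces the ergodic estimates, as in Proposition~\ref{ExistingRestriction}. The general $F$ statement follows because only the factorization of $P$ was used, not the specific set of frequencies.
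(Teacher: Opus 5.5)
Your proposal has a genuine gap at the step you yourself flag as subtle, and that step is the core of the theorem. It is the case where $a$ and $2ab$ are both irrational and $M(P)=M(Q)$; after the lattice case $b\notin\Q$, so the two angles are rationally independent.

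Because you work with a single orbit, $|g(t+Na)|=|g(t)|\prod_{j=1}^N|Q(t+ja)|/|P(t+ja)|$, you must control a Birkhoff sum for the rotation by $a$ and one for the rotation by $2ab$ at the same time $N$. (The ratio is $Q/P$, not $P/Q$, since $Pg=-Q\,T_ag$; this slip is harmless by symmetry.)

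The Demeter--Zaharescu bounds, and Denjoy--Koksma for off-circle roots, hold pointwise off an exceptional set and only for $N$ in the sparse set $\mathcal N(\theta)$ of selected convergent denominators of the given angle $\theta$. For two independent angles these sets are unrelated, so nothing controls the $Q$-product at $N\in\mathcal N(a)$. This already fails when all roots are off the circle, not only in the unimodular case.

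Your Fatou/Chebyshev repair rests on the claim that the $L^1$ norms of the log-products are $O(1)$ uniformly in $N$. If you mean this along each angle's own subsequence, it produces no common $N$. If you mean it for all $N$, Demeter--Zaharescu do not prove it, and its $L^2$ analogue is false. By Browder's theorem, $\sup_N\|\sum_{j\le N}f(\cdot+j\theta)\|_2<\infty$ would make $f(x)=\log|2\sin\pi(x-s)|$ an $L^2$-coboundary. The transfer function $h$ would then need $|\hat h(q_k)|\ge q_{k+1}/(4\pi q_k)\ge 1/(4\pi)$ along the convergent denominators, so $\hat h\notin\ell^2$.

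The missing idea is the paper's conjugate trick, combined with pairing a forward orbit with a backward one. In the normalized relation $g(t)P_n(t)=g(t-1)Q(t)$ one has $|Q(t)|=|r_1+r_2e^{2\pi i(2vt+\theta')}|$ with $r_1,r_2>0$, which is invariant under reflecting the phase.

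Using ergodicity of the rotation by $1/v$, the paper picks $t_0\in S$ whose partner $t_1=\{-t_0-\theta'/v+n'/v\}$ also lies in $S$, together with $m\in\Z$ such that $|Q(t_1+m-j)|=|Q(t_0+j)|$ for all $j$. In the product $|g(t_0+N)|\,|g(t_1-N+m-1)|$ the $Q$-products then cancel identically. What remains is the ratio of a backward $P_n$-product at $t_1$ to a forward one at $t_0$, along the single rotation by $a$. Lemma~\ref{Sec6} bounds this below for $N\in\mathcal N(a)$ whenever $t_0,t_1\in\mathcal P_n(N)$, and a reverse-Fatou argument supplies one $t_0$ that is good for infinitely many $N$.

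Since the exponential rates of the forward and backward orbits cancel, no Mahler-measure comparison is needed. The rotation by $2v$ never has to be synchronized with the rotation by $a$.

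What you have that does work:
\begin{itemize}
\item Your factor-by-factor treatment of $P$ matches Lemma~\ref{Sec6}.
\item Your Birkhoff/Jensen reduction to $M(P)=M(Q)$ is sound.
\item The cases with exactly one rational angle are workable in your framework, because only one irrational rotation appears there. However, Proposition~\ref{ExistingRestriction} covers only $n=1$, so you would have to carry out that periodicity argument yourself.
\end{itemize}
It is the doubly irrational case that requires the new idea.
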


The second statement of Theorem~\ref{main1} shows that the argument depends only on the commensurability of the collinear spacings rather than on their being equally spaced. After a suitable normalization, such configurations reduce to the integer-spaced setting treated in the proof.

The case $n=1$ of Theorem~\ref{main1} corresponds to the symmetric $(3,2)$  configuration and is stated separately in Section~\ref{sec:3-2config} as Theorem~\ref{thm:32}. Together with Proposition~\ref{ExistingRestriction}, it establishes the HRT conjecture for every symmetric $(3,2)$ configuration and every function in $L^2(\R)$. As an immediate consequence, we prove the following result:

\begin{cor}\label{cor:4pt}
   
    Let $0 \neq g \in L^2(\R)$ be real-valued, and let
    $\tilde{\Lambda} = \{(0,0),(0,1),(s,0),(a,b)\}$ consist of four distinct points
    where $a,b,s \neq 0$. Then Conjecture~\ref{HRTConjecture} holds
    for $\tilde{\Lambda}$ and $g$.
\end{cor}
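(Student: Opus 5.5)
The plan is to derive Corollary~\ref{cor:4pt} from the $n=1$ case of Theorem~\ref{main1} (together with Proposition~\ref{ExistingRestriction}) via the restriction principle. The idea is to show that a linear dependence of $\cG(g,\tilde\Lambda)$ for real-valued $g$ produces a linear dependence for a symmetric $(3,2)$ configuration.

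The first step is a normalization by metaplectic-type symmetries. Conjecture~\ref{HRTConjecture} for $(g,\Lambda)$ is invariant under a joint change: replace $\Lambda$ by its image under an area-preserving affine map of $\R^2$, and replace $g$ by its image under the corresponding unitary operator (translations, modulations, dilations, shears via chirps, the Fourier transform). The difficulty is that real-valuedness of $g$ must survive these operations. Translations, dilations and $t\mapsto -t$ preserve it, but chirps and modulations do not. I will therefore use only operations that either preserve realness or are applied to the configuration alone. Translating the configuration, i.e.\ applying $M_\beta T_\alpha$ to all elements, preserves linear independence without changing $g$. Using dilation $g\mapsto g(\cdot/c)$ (which maps $(x,y)\mapsto(cx,y/c)$), I will normalize $\tilde\Lambda=\{(0,0),(0,1),(s,0),(a,b)\}$ so that the relevant spacing is $1$.

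The second step uses realness. If $\sum c_k M_{b_k}T_{a_k}g=0$ with $g$ real, then taking complex conjugates gives $\sum \bar c_k M_{-b_k}T_{a_k}g=0$. Thus dependence over $\Lambda$ implies dependence over the reflected set $\Lambda^*=\{(x,-y)\}$. Combining the relation with a suitable translate of its conjugate should give a dependence over a union configuration that is symmetric about a horizontal line. I would first modulate by $M_{-1/2}$ so that $(0,0),(0,1)$ become $(0,\pm1/2)$, then dilate to get $(0,\pm1)$. The conjugate relation involves the same points $(0,\pm1)$ on the $x=0$ line, while $(s,0)$ sits on the symmetry axis and $(a,b)$ pairs with $(a,-b)$. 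The target is a system supported on $\{(0,1),(0,-1),(s,0),(a,b),(a,-b)\}$. Up to swapping the roles of time and frequency by the Fourier transform, which maps real $g$ to $\hat g$ with $\hat g(-\xi)=\overline{\hat g(\xi)}$, this should be a symmetric $(3,2)$-type configuration, with $(s,0)$ playing the role of $(0,0)$ after an appropriate rotation.

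The main obstacle is matching the geometry exactly. For $\Lambda_1$, the three collinear points must be vertical and the symmetric pair must share a time coordinate. In $\tilde\Lambda$, the points $(0,0),(0,1)$ lie on one vertical line and $(s,0)$ on the horizontal axis. I therefore expect to need the Fourier transform (or a rotation by $90^\circ$). Fourier conjugation of $g$ does not preserve realness, but it yields $\hat g$ with Hermitian symmetry, and that symmetry still lets the conjugation trick reflect frequencies. So I would first check carefully which reflection (in $x$ or in $y$) realness gives after each normalization. Once a symmetric five-point dependence is produced, the remaining task is to verify that it is nontrivial, i.e.\ that not all combined coefficients cancel. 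If they did cancel, the original relation would reduce to one on at most three points, and Linnell's result (Proposition~\ref{KnowResults}(vii)) excludes that. Theorem~\ref{thm:32} together with Proposition~\ref{ExistingRestriction} then gives the contradiction.
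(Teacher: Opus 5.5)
Your proposal has a genuine gap at the step you flag as the ``main obstacle'': the configuration you aim for is not a symmetric $(3,2)$ configuration, and none of the normalizations you list can turn it into one. For $s\neq a$, the set $\{(0,1),(0,-1),(s,0),(a,b),(a,-b)\}$ lies on the three distinct vertical lines $x=0$, $x=s$, $x=a$. Whenever three of these points are collinear, the remaining two lie on a line of the opposite slope $\mp 1/s$, so the set is never a $(3,2)$ configuration. Dilations, shears, the Fourier transform and rotations act on the time--frequency plane by linear maps, which preserve collinearity and parallelism, so none of them sends this set to $\{(0,0),(0,\pm1),(a',\pm b')\}$.

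The preliminary modulation also works against you. Conjugating a relation for real $g$ uses $\overline{M_\beta T_\alpha g}=M_{-\beta}T_\alpha g$, so it always reflects the current configuration in the line $y=0$. After you modulate by $M_{-1/2}$, the points are $(0,\pm\frac12)$, $(s,-\frac12)$ and $(a,b-\frac12)$. (You shifted $(0,0)$ and $(0,1)$ but not $(s,0)$ or $(a,b)$.) So $(s,0)$ is no longer on the reflection axis. The union with the conjugated relation is then $\{(0,\pm\frac12),(s,\pm\frac12),(a,\pm(b-\frac12))\}$, which again lies on three parallel lines.

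The missing idea is a \emph{cancellation}, not a union, and it needs no normalization at all. Because $(s,0)$ has frequency zero, $T_sg$ is real-valued and hence fixed by conjugation. Write the dependence as $T_sg=c_1g+c_2M_1g+c_3M_bT_ag$. The coefficient of $T_sg$ may be taken to be $1$, since otherwise Linnell's result rules out the remaining three-point relation. Conjugate to get $T_sg=\overline{c_1}g+\overline{c_2}M_{-1}g+\overline{c_3}M_{-b}T_ag$ and subtract. The $T_sg$ terms cancel, leaving
\[
(c_1-\overline{c_1})g+c_2M_1g-\overline{c_2}M_{-1}g+c_3M_bT_ag-\overline{c_3}M_{-b}T_ag=0.
\]
This relation is supported on $\{(0,0),(0,\pm1),(a,\pm b)\}$, exactly the configuration of Theorem~\ref{thm:32}. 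It is nontrivial, because $c_2=c_3=0$ would give $T_sg=c_1g$, a two-point dependence.

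Your target set is what you get by eliminating the other reflection-invariant point, $(0,0)$, instead of $(s,0)$. That choice leaves points on three vertical lines, whereas eliminating $(s,0)$ leaves only the lines $x=0$ and $x=a$, which is the symmetric $(3,2)$ geometry you need.
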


More generally, the same argument yields a family of $(n+3)$-point configurations for real-valued functions, extending the classical $(n,1)$ result of \cite{heil1996linear}.

The proof proceeds by contradiction. Assuming a linear dependence relation, we derive identities relating the values of the generating function along integer orbits to products of trigonometric polynomials. The key technical ingredient is a uniform estimate for these products, obtained by combining a factorization argument with the product estimates of Demeter and Zaharescu for irrational rotations. Once this estimate is established, the proofs of the main results follow by adapting the contradiction arguments developed for the $(2,2)$ configuration.

The paper is organized as follows. Section~\ref{sec:2} develops the necessary product estimates and proves the main technical lemma. Section~\ref{sec:3-2config} establishes the $(3,2)$ case and its applications to four-point configurations. Finally, Section~\ref{sec:2n1} extends the argument to symmetric $(2n+1,2)$ configurations and to configurations with commensurable spacings.

\section{Preliminary Estimates and Reduction}\label{sec:2}

In this section, we consider the case $n=1$, that is, the symmetric $(3,2)$-configuration. We assume for the sake of contradiction that for certain functions $0\neq g \in L^2(\R)$ the HRT conjecture fails and derive some key equations. We also recall some results of \cite{demeter2012proof} on products
of exponential differences along orbits of irrational rotations, and prove a key product estimate on which our proof relies.

Let $0\neq g \in L^2(\R)$. Suppose $\Lambda$ is a $(3,2)$ configuration given by 
$$\Lambda =\{ (0,0), (0,1), (0,-1), (a,b), (a,-b)\},$$ where $ab \neq 0$. It is well-known that linear independence of the Gabor system is preserved under a metaplectic transform. Hence, after some relabeling, assume 
$$ \Lambda =\{ (0,0), (0,a), (0,-a), (1,v), (1,-v)\}$$
where $v=ab \neq 0$. The following relation holds for a.e. $t \in \R$:
\begin{equation}\label{eq:ch3_relation} 
\begin{cases} 
g(t)P(t) = g(t-1)Q(t) \\ 
P(t) = c_1+ c_2 e^{2 \pi i a t}+c_3 e^{-2\pi iat}\\ 
Q(t) = d_1 e^{2 \pi i vt} + d_2 e^{-2\pi i vt} 
\end{cases}
\end{equation}
where there exists $c_1,c_2,c_3,d_1,d_2 \in \C$. Notice that all these coefficients are nonzero because the $(2,2)$ and the $(1,3)$ configurations with equally spaced points on a line have already been resolved \cite{demeter2012proof, heil1996linear, liu2019proof}. 

Since $g\neq0$, there exists $k\in\Z$ such that $g$ is nonzero on a subset of positive measure of $[k, k+1]$. After translating the variable, we may assume that this interval is $[0,1]$. Moreover,
$$
\int_0^1\sum_{n\in\mathbb Z}|g(t+n)|^2\,dt=\|g\|_2^2<\infty,
$$ 
so $g(t+n)\to 0$  as $|n|\to\infty$ for almost every $t\in [0,1]$. After removing a null set and the integer translates of the zeros of  $P$ and $Q$, Egorov's theorem yields a set $S\subset [0,1]$ of positive measure 
\begin{equation}\label{eq:L2decay}
    \lim_{|n| \to \infty, n \in \Z} g(t \pm n) = 0.
\end{equation}
holds \textit{uniformly} for all $t \in S.$
Moreover, $S+\Z$ contains no zeros of $P$ or $Q$.

As we will see, the proof relies on understanding the infinite product of $|P|$
along forward and backward orbits of $t \pm \Z$. Estimation of exactly this type was developed by Demeter and Zaharescu in their work of the $(2,2)$ case \cite{demeter2012proof}, and we now recall two results from their work that are required in our arguments.


Let $\{x\}$ and $\langle x\rangle:=\operatorname{dist}(x,\Z)=\min_{n\in\Z}|x-n|$
denote the fractional part of $x$ and its distance to the nearest integer. Fix an irrational $0<\alpha<1$ and let $p_k/N_k$ be its $k$th convergent. Assume
that the denominators are nondecreasing, $N_k\le N_{k+1}$ and the consecutive
convergents satisfy
$$p_kN_{k-1}-p_{k-1}N_k=(-1)^{k-1},$$
and that each convergent approximates $\alpha$ with
$$\Bigl|\alpha-\frac{p_k}{N_k}\Bigr|\le\frac{1}{N_kN_{k+1}}.$$
There exist an infinite
set $E\subset\N$ and a constant $D=D(\alpha)$ with
$$\frac{N_k}{N_{k+1}}\le D\min_{j\le k}\frac{N_j}{N_{j+1}}\qquad(k\in E).$$
The quantity $1/M_k:=N_k^2\,|\alpha-p_k/N_k|$ records the approximation quality
at step $k$. Throughout we take $k\in E$ odd with $N_k>100$, and
$0\le\delta\le\tfrac1{100}$.

    \begin{prop}\cite[Proposition 2.1]{demeter2012proof}\label{prop2.1} Define $N:= N_k, p:= p_k, M:= M_k$. Then for each $x \in [0,1]$ such that
    $$\min \bigg\{ \frac{\langle x\rangle}{N}, \langle x - n \alpha\rangle,\langle x -\tfrac{n}{N}\rangle: 1 \leq n \leq N\bigg\} \geq \frac{\delta}{N}$$
    we have 
    $$\prod_{n=1}^N |e^{2\pi i x} - e^{2\pi i \alpha n}| \sim_{\delta} 1.$$
\end{prop}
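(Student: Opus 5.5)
The plan is to compare the product with the corresponding product over the equally spaced points $\{n/N\}_{n=1}^N$. That product has an exact closed form, $\prod_{n=1}^N (z - e^{2\pi i n/N}) = z^N - 1$. Setting $z=e^{2\pi i x}$ gives
\[
\prod_{n=1}^N |e^{2\pi i x}-e^{2\pi i n/N}| = |e^{2\pi i Nx}-1| = 2|\sin(\pi N x)|.
\]
The hypothesis $\langle x - n/N\rangle \ge \delta/N$ for all $n$ gives $\langle Nx\rangle\ge\delta$. Hence this reference product is $\sim_\delta 1$. It then suffices to show that the ratio
\[
R(x)=\prod_{n=1}^N \frac{|e^{2\pi i x}-e^{2\pi i n\alpha}|}{|e^{2\pi i x}-e^{2\pi i n/N}|}
\]
is bounded above and below by constants depending only on $\delta$.

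To compare the two products, I will write $n\alpha = n p/N + n\epsilon$, where $\epsilon=\alpha-p/N$ satisfies $|\epsilon|\le 1/(N N_{k+1})\le 1/N^2$. Since $\gcd(p,N)=1$, the map $n\mapsto np \bmod N$ permutes $\{1,\dots,N\}$. I can therefore pair each $n$ with $m=m(n)$ such that $np\equiv m \pmod N$. Then $n\alpha \equiv m/N + \theta_n \pmod 1$, with $|\theta_n|=n|\epsilon|\le 1/N$. Each factor of $R$ is
\[
\frac{|\sin\pi(x-m/N-\theta_n)|}{|\sin\pi(x-m/N)|}.
\]
Taking logarithms, $\log R$ is a sum of terms $\log|1-\theta_n\pi\cot\pi(x-m/N)+O(\cdot)|$.

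The key step is to organize these terms by the distance $d_m=\langle x-m/N\rangle$. The hypotheses give $d_m\ge\delta/N$ and $\langle x-n\alpha\rangle\ge\delta/N$, so no factor is close to $0$ in either product. Hence each individual ratio is bounded above and below by constants depending on $\delta$; the ratio is roughly $(d_m\pm\theta_n)/d_m$, and both numerator and denominator are at least $\delta/N$ while $|\theta_n|\le 1/N$. Only $O(1)$ indices have $d_m\lesssim 1/N$ at any given scale. For the remaining indices, $|\theta_n|/d_m\lesssim 1/(N d_m)$, and the log-ratio equals $-\pi\theta_n\cot(\pi(x-m/N))$ up to an error $O((N d_m)^{-2})$. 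The quadratic errors are summable, since $\sum_j j^{-2}<\infty$ over $d_m\approx j/N$.

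The main obstacle is the linear term $\sum_n \theta_n\cot\pi(x-m(n)/N)$. Bounding it in absolute value only gives $\sum_j 1/j\sim\log N$, which is unbounded. I will use the structure $\theta_n = n\epsilon$ together with $m\equiv np$. Writing $n \equiv m\bar p \pmod N$, where $\bar p \equiv (-1)^{k-1}N_{k-1}$ comes from the determinant identity, the sum becomes $\epsilon\sum_m n(m)\cot\pi(x-m/N)$. Using the cancellation of the cotangent between points symmetric about $x$, I reduce this to controlling $\epsilon N \sum_m (n(m)/N)\cot(\cdot)$. The difference between $n(m)/N$ at the two symmetric points is governed by $N_{k-1}/N$-type quantities. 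This is where the restriction $k\in E$ enters: the condition $N_k/N_{k+1}\le D\min_{j\le k}N_j/N_{j+1}$ controls $N|\epsilon|=1/(M N)\cdot N$ against the size of these oscillating sums. The restriction to odd $k$ fixes the sign of $\epsilon$, so that all $\theta_n$ shift in the same direction. If this summation-by-parts argument does not close directly, I would fall back on the Demeter--Zaharescu approach: use Koksma--Hlawka/Denjoy–Koksma-type discrepancy bounds for the rotation over a full period $N$, applied to the function $\log|e^{2\pi ix}-e^{2\pi i y}|$ truncated at scale $\delta/N$.
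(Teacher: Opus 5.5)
The paper does not prove this statement; it quotes it from Demeter--Zaharescu. Your argument therefore has to stand on its own, and it has a gap at exactly the decisive step.

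Your reduction is fine. The identity $\prod_{n=1}^N|e^{2\pi ix}-e^{2\pi inp/N}|=2|\sin\pi Nx|\sim_\delta 1$ holds, the boundedly many terms with $\langle x-m/N\rangle\lesssim 1/N$ are harmless, and the quadratic errors are summable. After that reduction, however, everything rests on the first-order term. Since $|\epsilon|=1/(M_kN^2)$, you must show
\[
\Bigl|\sum_{n=1}^N \frac{n}{N}\cot\pi\Bigl(x-\frac{np}{N}\Bigr)\Bigr| = O_\delta(M_kN).
\]
You do not prove this, and the pairing heuristic you sketch is inaccurate. At the grid points $m_0\pm j$, the weights $n(m)/N=\{m\bar p/N\}$ differ by $\pm 2jN_{k-1}/N$ only modulo $1$, and the wrap-around is a leading-order effect. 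Take $m_0=p$, where $n(m_0)=1$. For $1\le j\lesssim N_k/N_{k-1}$ the weights are $\approx jN_{k-1}/N$ on one side and $\approx 1-jN_{k-1}/N$ on the other. The paired terms therefore add up to $\asymp N\log(N_k/N_{k-1})$ rather than cancelling, and analogous contributions arise at every level of the continued fraction of $N_{k-1}/N_k$.

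So the first-order term can genuinely be of size $\log(N_k/N_{k-1})/M_k$. It is bounded only because $k\in E$ forces $M_k\ge N_{k+1}/N_k\ge D^{-1}\max_{j\le k}N_{j+1}/N_j$. Turning that into a proof needs a multiscale estimate of the cotangent sum above (a shifted Vasyunin-type cotangent sum) in terms of the ratios $N_{j+1}/N_j$, $j\le k$. Your sentence ``this is where $k\in E$ enters'' stands in for that estimate without supplying it.

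The fallback does not close the gap. The kernel $\log|e^{2\pi ix}-e^{2\pi iy}|$, truncated at scale $\delta/N$, has total variation $\approx 2\log(N/\delta)$. Denjoy--Koksma, or Koksma--Hlawka with discrepancy $O(1/N)$, therefore gives only $\bigl|\sum_{n=1}^N\log|e^{2\pi ix}-e^{2\pi i\alpha n}|\bigr|\lesssim\log(N/\delta)$. That is a bound of the form $N^{\pm C}$, not $\sim_\delta 1$.

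More fundamentally, such an argument never uses $k\in E$, whereas the conclusion fails without it. Along odd $k$ with $N_{k+1}/N_k$ bounded and $N_k/N_{k-1}\to\infty$, the computation above produces admissible $x$, for instance $x\approx(p+\tfrac12)/N$, for which the product is of size $(N_k/N_{k-1})^{\pm c}$.

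A minor point: all $\theta_n=n\epsilon$ have the same sign for every $k$. Parity only decides which sign, so it does no work in your argument.
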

The following is a immediate corollary.

\begin{prop}\cite[Corollary 2.3]{demeter2012proof}\label{coro2.3}
    Let $A, B \in \C$ with $|A| = |B|=1$. Let also $\alpha$ and $N$ be as in Proposition \ref{prop2.1}. Define 
    $$P(x) = A + Be^{2\pi i \alpha x}$$
    Then for each $0 < \epsilon < 1$ there exist $p_1(\epsilon,A,B,\alpha),p_2(\epsilon,A,B,\alpha)>0$ and a set $\mathcal{P}(A,B,\epsilon,\alpha,N)\subset [0,1]$ with measure at least $1-\epsilon$ such that for each $y \in \mathcal{P}(A,B,\epsilon,\alpha,N)$
    $$p_2(\epsilon,A,B,\alpha) \leq \prod_{n = -N}^{-1}|P(y+n)|\leq p_1(\epsilon,A,B,\alpha)$$
    and 
    $$p_2(\epsilon,A,B,\alpha) \leq \prod_{n = 0}^{N-1}|P(y+n)|\leq p_1(\epsilon,A,B,\alpha)$$
    Notice that the set $\mathcal{P}$ is allowed to depend on N, but the constants $p_1,p_2$ do not.
\end{prop}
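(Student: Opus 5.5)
Since the statement is an immediate consequence of Proposition~\ref{prop2.1}, the plan is to factor $P$ into the form treated there and then use a measure argument to control the exceptional points. I would first reduce to $B=-1$. Write $A=e^{2\pi i\theta}$ and $B=-e^{2\pi i\phi}$. Then
$$|P(x)|=|e^{2\pi i\theta}-e^{2\pi i(\phi+\alpha x)}|=|e^{2\pi i(\theta-\phi)}-e^{2\pi i\alpha x}|.$$
Put $x=y+n$ with $n$ ranging over $0,\dots,N-1$, or over $-N,\dots,-1$. Then $|P(y+n)|=|e^{2\pi i z}-e^{2\pi i\alpha n}|$ with $z=\theta-\phi-\alpha y$, reduced mod $1$. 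For the forward block, reindexing $n\mapsto n+1$ changes only one factor. Alternatively one can absorb the shift into $z$ by replacing $z$ with $z+\alpha$. Either way the product becomes $\prod_{n=1}^N|e^{2\pi i x}-e^{2\pi i\alpha n}|$ with $x=z+\alpha$. The backward block is handled the same way, with $x=z-(N+1)\alpha$ mod $1$, or by conjugation, which turns $-n$ into $n$. Since the full factor is a bounded perturbation only when it stays away from zero, it is cleanest to apply Proposition~\ref{prop2.1} directly to an affine image $x=x(y)=c\pm\alpha y$ mod $1$.

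Next I would build the good set. For fixed $N$ and $\delta$, the bad $x\in[0,1]$ in Proposition~\ref{prop2.1} are those within $\delta/N$ of one of the $2N+1$ points $0$ (this only needs $\langle x\rangle\ge\delta$), $n\alpha$, or $n/N$, for $1\le n\le N$. The bad set therefore has measure at most about $2\delta+4\delta=O(\delta)$. The map $y\mapsto x(y)$ is affine with slope $\alpha\in(0,1)$, so it pulls sets back with measure distortion at most $1/\alpha$. Its image wraps around $[0,1]$ at most twice, so the preimage in $y\in[0,1]$ of the bad set has measure $\lesssim\delta/\alpha$. I would do this for the forward and backward blocks separately and take the union. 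Choosing $\delta=\delta(\epsilon,\alpha)\le 1/100$ small makes the total bad measure less than $\epsilon$, and $\mathcal P$ is its complement.

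On $\mathcal P$, Proposition~\ref{prop2.1} gives both products $\sim_\delta 1$. The implicit constants depend only on $\delta$, hence only on $\epsilon$ and $\alpha$. The shift $\theta-\phi$ enters only through the set, so $A,B$ also enter only through the set, and $p_1,p_2$ are independent of $N$. The one step I would check carefully is the exact indexing: that the forward block $0,\dots,N-1$ and the backward block $-N,\dots,-1$ both map exactly onto the index range $1,\dots,N$ after shifting $x$. That is, one must verify $\prod_{n=0}^{N-1}|e^{2\pi iz}-e^{2\pi i\alpha n}|=\prod_{n=1}^{N}|e^{2\pi i(z+\alpha)}-e^{2\pi i\alpha n}|$, and similarly for the backward block using $e^{2\pi i\alpha(-n)}$ and the substitution $x\mapsto -x$ mod $1$. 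Once this is confirmed, no new estimate is needed.
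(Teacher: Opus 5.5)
Your proposal is correct and follows the same route as the paper. The paper only cites this corollary, but it carries out exactly this reduction in the $|\rho_j|=1$ case of Lemma~\ref{Sec3}: rewrite each block as $\prod_{n=1}^N|e^{2\pi i x}-e^{2\pi i\alpha n}|$ with $x$ an affine function of $y$ of slope $\pm\alpha$, bound the exceptional set of Proposition~\ref{prop2.1} by $6\delta$, pull it back with distortion $1/\alpha$, and choose $\delta$ in terms of $\epsilon$ and $\alpha$. One small slip: the shift that realigns the backward block is $x=z+(N+1)\alpha$, not $z-(N+1)\alpha$, but your conjugation alternative $x=-z$ is correct and suffices.
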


With Equation~\eqref{eq:ch3_relation} and Propositions~\ref{prop2.1} and~\ref{coro2.3}
in hand, we can derive the estimate that is at the heart of our proof. The difference between our setting and \cite{demeter2012proof} is that in Equation~\eqref{eq:ch3_relation}, the trigonometric polynomial $P$ is quadratic rather than linear. Using the fact that $P$ has frequencies $\{ 0,a,-a\}$, we can factor $P$, and the behavior of each factor along the orbit $t\pm \Z$ depends on where its root sits. A root on the
circle produces zeros of $P$ on the real line, and the product along
the orbit must be controlled through Propositions~\ref{prop2.1} and~\ref{coro2.3}, at the cost of removing a
small exceptional set of $t$; a root off the circle keeps the factor
bounded away from zero, and the product is instead controlled via a Riemann sum argument. In Lemma~\ref{Sec3} below we keep track of zeros in $P$ and make the above statement precise. Note that in \cite{demeter2012proof} these estimates are applied to a single
linear factor. In our setting, the two factors of $P$ can be controlled
\emph{simultaneously}: each root contributes its own exceptional set and its own pair of constants, and the content of Lemma~\ref{Sec3} shows that these can be combined.

\begin{lemma}\label{Sec3}
    Let $P(t)$ be as defined in Equation~\eqref{eq:ch3_relation}, and let
    $\mathcal{N}=\mathcal{N}(a)\subset\N$ denote the infinite set of denominators
    $N = N_k$ produced by Proposition~\ref{prop2.1} (i.e.\ $k \in E$ odd,
    $N_k>100$). For each $0 < \epsilon<\frac{1}{2}$, there exist constants
    $C_1,C_2>0$ that depend only on $\epsilon,c_1,c_2,c_3,a$ but not on $N$,
    and for each $N \in \mathcal{N}$ a set $\mathcal{P}(N)\subset [0,1]$ with
    measure at least $1-\epsilon$, such that for all $N \in \mathcal{N}$ and
    all $t_0,t_1 \in \mathcal{P}(N)$:
    $$C_2 \leq \frac{\prod_{n=-N}^{-1}|P(t_1+n)|}{\prod_{n=1}^{N}|P(t_0+n)|} \leq C_1.$$
\end{lemma}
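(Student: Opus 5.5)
Factor $P$ first. Write $z=e^{2\pi i a t}$; then $P(t)=z^{-1}(c_2z^2+c_1z+c_3)=c_2 z^{-1}(z-r_1)(z-r_2)$ with $r_1r_2=c_3/c_2\neq0$. Since $|z^{-1}|=1$,
\[
|P(t)|=|c_2|\,|e^{2\pi i a t}-r_1|\,|e^{2\pi i a t}-r_2| .
\]
The constant $|c_2|^{2N}$ appears in both numerator and denominator of the ratio and cancels. It therefore suffices to prove the two-sided estimate separately for each factor $F_j(t)=|e^{2\pi i a t}-r_j|$ and multiply the results. For each factor we aim for constants independent of $N$ and a good set of measure at least $1-\epsilon/2$. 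Intersecting the two good sets then gives $\mathcal{P}(N)$ with measure at least $1-\epsilon$, and the product of the constants gives $C_1,C_2$.

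\emph{Case $|r_j|=1$.} Write $r_j=-A/B$ with $|A|=|B|=1$, so $F_j(t)=|A+Be^{2\pi i a t}|$. We may reduce to $0<a<1$ irrational. If $a$ is rational, $F_j$ is periodic along $t+\Z$ and the estimate is direct away from a small neighbourhood of its zeros; if $a<0$, conjugate; and replacing $a$ by its fractional part does not change $e^{2\pi i a n}$ for integer $n$, only the fixed factor $e^{2\pi i a t}$. Apply Proposition~\ref{coro2.3} with $\epsilon/4$ in place of $\epsilon$. This gives a set of measure at least $1-\epsilon/4$ on which both $\prod_{n=-N}^{-1}F_j(y+n)$ and $\prod_{n=0}^{N-1}F_j(y+n)$ lie in $[p_2,p_1]$. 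The denominator in the lemma runs over $n=1,\dots,N$ rather than $0,\dots,N-1$. To fix this, write
\[
\prod_{n=1}^N F_j(t_0+n)=\prod_{n=0}^{N-1}F_j(t_0+n)\cdot\frac{F_j(t_0+N)}{F_j(t_0)} .
\]
The numerator $F_j(t_0+N)\le 2$. The denominator $F_j(t_0)$ is bounded below once we remove from the good set the $t_0$ with $F_j(t_0)<\eta$, which has measure at most $\epsilon/4$ for small $\eta$. The remaining issue is that $F_j(t_0+N)$ could be tiny, which would spoil the lower bound. I would handle this by also removing the $t_0$ for which $\langle a(t_0+N)-\theta_j\rangle<\eta$, where $\theta_j$ is the argument of the root. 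For fixed $N$ this set has measure $O(\eta)$ uniformly in $N$, so it is allowed, because $\mathcal{P}(N)$ may depend on $N$. Hence for $t_0$ in the good set the forward product over $n=1,\dots,N$ lies in $[p_2\eta/2,\,2p_1/\eta]$, and for $t_1$ the backward product lies in $[p_2,p_1]$. The ratio for this factor is then bounded above and below.

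\emph{Case $|r_j|\neq1$.} Here $F_j(t)\ge\bigl||r_j|-1\bigr|>0$ everywhere, so $\log F_j$ is a smooth, bounded function of $e^{2\pi i a t}$. For $a$ irrational, both $\sum_{n=1}^N\log F_j(t_0+n)$ and $\sum_{n=-N}^{-1}\log F_j(t_1+n)$ are Birkhoff sums of the same function under rotation by $a$, so each is $N\mu+o(N)$, where $\mu=\int_0^1\log|e^{2\pi i x}-r_j|\,dx=\log\max(1,|r_j|)$ by Jensen's formula. An $o(N)$ error is not good enough, since we need the difference of the two sums to be $O(1)$. This is where the denominators $N=N_k$ are used. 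Since $|a-p/N|\le 1/(NN_{k+1})$, the points $\{t+na\}_{n=1}^N$ differ from the $N$-point rational grid $\{t+np/N\}$ (a shifted copy of $\frac1N\Z$) by at most $1/N_{k+1}\le 1/N$ each. The function $\phi(x)=\log|e^{2\pi i x}-r_j|$ is Lipschitz, so each sum differs from the Riemann sum $\sum_{m}\phi(s+m/N)$ for some shift $s$ by $O(1)$. A Riemann sum over a full grid of a smooth periodic function equals $N\mu+O(1)$, indeed exponentially close to $N\mu$ by Fourier decay. Both sums are therefore $N\mu+O(1)$ uniformly in $t_0,t_1$, and the ratio for this factor is bounded with no exceptional set.

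Combining the two factors gives the lemma. I expect the main obstacle to be the bookkeeping in the unimodular case. Proposition~\ref{coro2.3} has to be applied to each unimodular root separately, with the index shift from $0,\dots,N-1$ to $1,\dots,N$ and the removal of the near-zero endpoint sets done so that the constants stay independent of $N$. If both roots lie on the circle, possibly equal, the same argument applies to each factor separately, or to the squared factor.
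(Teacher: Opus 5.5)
Your proposal is correct and follows essentially the paper's route. You factor $P$ in $\omega=e^{2\pi i a t}$ and handle unimodular roots with the Demeter--Zaharescu estimates off a small $N$-dependent exceptional set. You handle off-circle roots by comparing each orbit sum of $\log|e^{2\pi i x}-\rho_j|$ with a Riemann sum over a full shifted grid $\frac1N\Z$, using $|a-p/N|\le 1/N^2$, and then multiply the factor bounds.

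The differences are minor. You invoke Proposition~\ref{coro2.3}, which bounds the forward and backward products on the same set, and you repair the $0,\dots,N-1$ versus $1,\dots,N$ index shift by deleting two $O(\eta)$ endpoint sets. The paper instead applies Proposition~\ref{prop2.1} directly in the phase variable $u=\{s_j-at\}$. That substitution is also the clean way to make your reduction from general $a$ to $0<a<1$ precise, since for $a>1$ the starting phase $e^{2\pi i a t}$ genuinely depends on $t$. Drop your aside on rational $a$: $\mathcal N(a)$ presupposes $a$ irrational, and for rational $a$ the forward and backward products grow geometrically at $t$-dependent rates, so the bound actually fails there.
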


\noindent\underline{Idea of the proof.}
We factor $P$ into linear factors in the variable
$\omega=e^{2\pi iat}$ and treat each root separately. Roots on the unit circle
are controlled using the product estimates of
\cite{demeter2012proof}, while roots off the unit circle are handled by a
uniform Riemann-sum argument. Multiplying the resulting estimates gives the
claim.

\begin{proof}
Let $\omega = e^{2 \pi i a t}$ and denote by $\rho_1, \rho_2$ the roots of the
quadratic $c_2 \omega^2 + c_1 \omega + c_3 = 0$. Then
$$e^{2\pi i at }P(t) = c_1 e^{2 \pi i a t} + c_2 e^{4 \pi i a t} + c_3
  = c_2 (e^{2\pi i a t} - \rho_1)(e^{2\pi i a t} - \rho_2),$$
with $\rho_1 \rho_2 = c_3/c_2$ and $\rho_1 + \rho_2 = -c_1/c_2$. Since
$|e^{2\pi iat}|=1$ on $\R$, it follows that for all $t_0,t_1 \in \R$
$$\frac{\prod_{n=-N}^{-1} |P(t_1+n)|}{\prod_{n=1}^N |P(t_0+n)|}
  = \prod_{j=1,2} \frac{\prod_{n=-N}^{-1} |e^{2\pi i a(t_1+n)}-\rho_j|}
                       {\prod_{n=1}^N |e^{2\pi i a(t_0+n)}-\rho_j|},$$
so it suffices to bound each factor $j=1,2$ separately. We treat the two cases
$|\rho_j|=1$ and $|\rho_j|\ne1$ in turn.

\medskip
\noindent\underline{Case $|\rho_j|=1$.} Write $\rho_j= e^{2\pi i s_j}$ with
$s_j\in[0,1)$; in this case $P$ has real zeros. Since $|e^{-2\pi i(s_j-at)}|=1$,
\begin{align*}
    \prod_{k=1}^N|e^{2 \pi i a (t + k)} - \rho_j|
    &= \prod_{k=1}^N|e^{2 \pi i a (t + k)} - e^{2 \pi i s_j}|
     = \prod_{k=1}^N |e^{2 \pi i a k} - e^{2 \pi i \{s_j - at \}}|.
\end{align*}

Fix $0 < \delta < \frac{1}{100}$ and let $N \in \mathcal{N}$, so that
$N = N_k > 100$ with $k \in E$ odd. It is convenient to phrase the exceptional
set in the variable $u=\{s_j-at\}$. Let
$$\widetilde B_{\delta,N}=
\Bigl\{ u \in [0,1] : \min\Bigl\{ \tfrac{\langle u\rangle}{N},\,
   \langle u-ka\rangle,\, \langle u -\tfrac{k}{N}\rangle : 1 \leq k \leq N \Bigr\}
   < \tfrac{\delta}{N}\Bigr\},$$
which is exactly the set of $u$ failing the hypothesis of
Proposition~\ref{prop2.1}, and set
$$B_{\delta,j,N}=\bigl\{ t\in[0,1] : \{s_j-at\}\in \widetilde B_{\delta,N}\bigr\}.$$
We first bound $|\widetilde B_{\delta,N}|$, then transfer the bound to
$B_{\delta,j,N}$, where  $|\cdot |$ denotes the Lebesgue measure of a set. To this end, we embed $\widetilde B_{\delta,N}$ into the following three subsets of $[0,1]$:

$$\widetilde B_{\delta,N}\subseteq
\widetilde B_{\delta,N}^{\,1}\cup\widetilde B_{\delta,N}^{\,2}\cup\widetilde B_{\delta,N}^{\,3},$$
where
\begin{align*}
\widetilde B_{\delta,N}^{\,1} &= \Bigl\{ u \in [0,1] : \tfrac{\langle u\rangle}{N} < \tfrac{\delta}{N} \Bigr\}\\
\widetilde B_{\delta,N}^{\,2} &= \Bigl\{ u : \min_{1\le k\le N} \langle u-ka\rangle < \tfrac{\delta}{N} \Bigr\}\\
\widetilde B_{\delta,N}^{\,3} &= \Bigl\{ u : \min_{1\le k\le N} \langle u -\tfrac{k}{N}\rangle < \tfrac{\delta}{N} \Bigr\}
\end{align*}

 \underline{Estimating $|\widetilde B^{\,1}_{\delta,N}|$:} The condition is $\langle u\rangle<\delta$,
and since $\langle u\rangle=\min\{u,1-u\}$ on $[0,1]$,
$$\widetilde B^{\,1}_{\delta,N}=[0,\delta)\cup(1-\delta,1].$$ Consequently, $$
  |\widetilde B^{\,1}_{\delta,N}|=2\delta.$$

\underline{Estimating $|\widetilde B^{\,2}_{\delta,N}|$:}  For each fixed $k$ the set
$\{u:\langle u-ka\rangle<\delta/N\}$ is a union of at most two intervals of total
length $2\delta/N$, so
$$|\widetilde B^{\,2}_{\delta,N}| \leq \sum_{k=1}^N \tfrac{2\delta}{N} = 2\delta.$$

\underline{Estimating $|\widetilde B^{\,3}_{\delta,N}|$:}  Similarly, we have $|\widetilde B^{\,3}_{\delta,N}| \leq \sum_{k=1}^N \tfrac{2\delta}{N}=2\delta$.\\

Combining these three estimates leads to
$$|\widetilde B_{\delta,N}| \leq
  |\widetilde B^{\,1}_{\delta,N}|+|\widetilde B^{\,2}_{\delta,N}|+|\widetilde B^{\,3}_{\delta,N}|
  \leq 6\delta.$$
For any measurable $A\subseteq[0,1]$,
$$\bigl|\{t\in[0,1]:\{s_j-at\}\in A\}\bigr|\;\leq\;C_a|A|$$ where $C_a>0$ is a constant that depends only on $a$. In fact, when $a<1$, we can take $C_a=1/a$, and when $a>1$, we can choose $C_a=\tfrac{\lceil a\rceil}{a}$ where $\lceil a \rceil$ is the ceiling of $a$. 

Applying this with $A=\widetilde B_{\delta,N}$ gives
$$|B_{\delta,j,N}|\;\leq\; 6 C_a\delta.$$
By construction, for every $t\notin B_{\delta,j,N}$ the point
$w=\{s_j-at\}$ satisfies the hypothesis of Proposition~\ref{prop2.1} with
$\alpha=a$, so
$$\prod_{n=1}^N|e^{2\pi ia(t+n)}-\rho_j| = \prod_{n=1}^N|e^{2\pi iw} - e^{2\pi ian}|\sim_\delta 1,$$
and the backward product is bounded in the same way. Thus there exist
$p_1,p_2>0$, depending only on $\epsilon,c_2,s_j,a$ but not on $N$, such that for
all $t\notin B_{\delta,j,N}$,
\begin{equation}\label{eq:circle-bounds}
    p_2 \leq \prod_{n = -N}^{-1}|e^{2\pi i a (t + n)} - \rho_j|\leq p_1
    \qquad\text{and}\qquad
    p_2 \leq \prod_{n = 1}^{N}|e^{2\pi ia (t + n)} - \rho_j|\leq p_1.
\end{equation}

\medskip
\noindent\underline{Case $|\rho_j|\ne1$.} Here $\phi_j(t):=\ln|e^{2\pi it}-\rho_j|$
is $1$-periodic and continuously differentiable, with
$$\|\phi_j'\|_\infty \leq \frac{2\pi a}{\bigl|1-|\rho_j|\bigr|}<\infty.$$
For $N=N_k\in\mathcal N$ let $p=p_k$ be the corresponding convergent numerator,
so that $\langle Na\rangle\le|N_ka-p_k|\le\frac1{N_{k+1}}\le\frac1N$; in
particular $N\langle Na\rangle\le1$, $|a-\frac pN|\le\frac{\langle Na\rangle}{N}\le\frac1{N^2}$,
and $\gcd(p,N)=1$. Because $\gcd(p,N)=1$, the map $n\mapsto\{p(t+n)/N\}$ rearranges $\{0,\dots,N-1\}$ onto the shifted grid
$\{\{\frac{pt}N\},\{\frac{pt}N+\frac1N\},\dots,\{\frac{pt}N+\frac{N-1}N\}\}$;
indeed, if $\{\frac{pt}N+\frac{pn_1}N\}=\{\frac{pt}N+\frac{pn_2}N\}$ then
$p(n_1-n_2)\equiv0\pmod N$, and coprimality forces $n_1=n_2$. 

Sorting these
points, we note that $\sum_{n=0}^{N-1}\phi_j(\{\frac{p(t+n)}N\})$ is a Riemann sum for
$\phi_j$ on $[0,1]$, so
\begin{equation}\label{riemsum}
\Bigl|\sum_{n=0}^{N-1}\phi_j\bigl(\{\tfrac{p(t+n)}{N}\}\bigr) - N\!\int_0^1 \phi_j\Bigr|
  \leq \|\phi_j'\|_{\infty}.
\end{equation}
By the Mean Value Theorem, for all $t\in[0,1]$ and $0\le n\le N-1$,
$$\Bigl|\phi_j(\{a(t+n)\}) - \phi_j\bigl(\{\tfrac{p(t+n)}{N}\}\bigr)\Bigr|
  \leq \|\phi_j'\|_\infty\,|t+n|\,\Bigl|a-\tfrac pN\Bigr|
  \leq \|\phi_j'\|_\infty\, N\cdot\tfrac{\langle Na\rangle}{N}
  = \|\phi_j'\|_\infty\,\langle Na\rangle,$$
and summing over $0\le n\le N-1$,
\begin{equation}\label{riemansumcomponent}
   \Bigl| \sum_{n=0}^{N-1}\phi_j(\{ a(t+n)\}) - \sum_{n=0}^{N-1}\phi_j\bigl(\{\tfrac{p(t+n)}{N}\}\bigr)\Bigr|
   \leq \|\phi_j'\|_{\infty}\, N\langle Na\rangle \leq \|\phi_j'\|_{\infty}.
\end{equation}
Combining \eqref{riemsum} and \eqref{riemansumcomponent},
\begin{equation}\label{x_integral}
    \Bigl| \sum_{n=0}^{N-1}\phi_j(\{ a(t+n)\}) - N\!\int_0^1 \phi_j \Bigr| \leq 2\|\phi_j'\|_{\infty}.
\end{equation}
For the backward orbit, $t\in[0,1]$ and $-N\le n\le-1$ give $|t+n|\le N$, so the
same two estimates (now with the factor $2N$ in the analogue of
\eqref{riemansumcomponent}) yield
\begin{equation}\label{z_integral}
\Bigl| \sum_{n=-N}^{-1}\phi_j(\{ a(t+n)\}) - N\!\int_0^1 \phi_j\Bigr| \leq 3\|\phi_j'\|_{\infty}.
\end{equation}
Subtracting \eqref{z_integral} from \eqref{x_integral}, for all $t_0,t_1\in[0,1]$,
$$\Bigl| \sum_{n=0}^{N-1}\phi_j(\{ a(t_0+n)\}) - \sum_{n=-N}^{-1}\phi_j(\{ a(t_1+n)\})\Bigr| \leq 5\|\phi_j'\|_{\infty},$$
and exponentiating,
\begin{equation}\label{eq:offcircle-ratio}
    e^{-5\|\phi_j'\|_{\infty}} \leq
    \frac{\prod_{n=-N}^{-1} |e^{2\pi i a (t_1+n)} - \rho_j|}{\prod_{n=0}^{N-1} |e^{2\pi i a (t_0+n)} - \rho_j|}
    \leq e^{5\|\phi_j'\|_{\infty}},
\end{equation}
with constants independent of $N,t_0,t_1$. Note that \eqref{eq:offcircle-ratio}
uses the forward range $0\le n\le N-1$, whereas the statement of the lemma uses
$1\le n\le N$; the two differ by the single factor
$|e^{2\pi ia(t_0+N)}-\rho_j|\,/\,|e^{2\pi iat_0}-\rho_j|$, and since $|\rho_j|\ne1$
every such factor lies in $[\,|1-|\rho_j||,\,1+|\rho_j|\,]$, this correction is
bounded above and below independently of $N$. Thus, after adjusting the
constants,
\begin{equation}\label{eq:offcircle-ratio-2}
    C_2^{(j)} \leq \frac{\prod_{n=-N}^{-1} |e^{2\pi i a (t_1+n)} - \rho_j|}{\prod_{n=1}^{N} |e^{2\pi i a (t_0+n)} - \rho_j|} \leq C_1^{(j)}
\end{equation}
for all $t_0,t_1\in[0,1]$, with $C_1^{(j)},C_2^{(j)}>0$ independent of $N$; roots
with $|\rho_j|\ne1$ therefore impose no restriction on the good set.

\medskip
\noindent\underline{Combining the two cases.} Let $J_1=\{j:|\rho_j|=1\}$ and
$J_2=\{j:|\rho_j|\ne1\}$, so $J_1\cup J_2=\{1,2\}$. Choose
$$\delta := \frac{ \epsilon}{12 C_a}, \qquad
  \mathcal P(N) := [0,1]\setminus\bigcup_{j\in J_1}B_{\delta,j,N}.$$
Since $|B_{\delta,j,N}|\le 6 C_a \delta$ and $|J_1|\le2$, we have
$|[0,1]\setminus\mathcal P(N)|\le |J_1|\cdot 6 C_a\delta \le 12 C_a \delta=\epsilon$,
so $|\mathcal P(N)|\ge1-\epsilon$; note $\delta<\frac1{100}$ once
$\epsilon<\frac{12C_a}{100}$, which we may assume. Now fix
$t_0,t_1\in\mathcal P(N)$ and consider the factorization
$$\frac{\prod_{n=-N}^{-1} |P(t_1+n)|}{\prod_{n=1}^N |P(t_0+n)|}
  = \prod_{j\in J_1}\underbrace{\frac{\prod_{n=-N}^{-1} |e^{2\pi i a(t_1+n)}-\rho_j|}{\prod_{n=1}^N |e^{2\pi i a(t_0+n)}-\rho_j|}}_{(\mathrm I)_j}
    \;\cdot\;
    \prod_{j\in J_2}\underbrace{\frac{\prod_{n=-N}^{-1} |e^{2\pi i a(t_1+n)}-\rho_j|}{\prod_{n=1}^N |e^{2\pi i a(t_0+n)}-\rho_j|}}_{(\mathrm{II})_j}.$$
For $j\in J_1$, both $t_0,t_1\notin B_{\delta,j,N}$, so \eqref{eq:circle-bounds}
bounds numerator and denominator of $(\mathrm I)_j$ within $[p_2^{(j)},p_1^{(j)}]$,
giving $\frac{p_2^{(j)}}{p_1^{(j)}}\le(\mathrm I)_j\le\frac{p_1^{(j)}}{p_2^{(j)}}$.
For $j\in J_2$, \eqref{eq:offcircle-ratio-2} gives
$C_2^{(j)}\le(\mathrm{II})_j\le C_1^{(j)}$. Multiplying the at most two factors,
$$C_2 := \prod_{j\in J_1}\frac{p_2^{(j)}}{p_1^{(j)}}\prod_{j\in J_2}C_2^{(j)}
  \;\leq\; \frac{\prod_{n=-N}^{-1} |P(t_1+n)|}{\prod_{n=1}^N |P(t_0+n)|}
  \;\leq\; \prod_{j\in J_1}\frac{p_1^{(j)}}{p_2^{(j)}}\prod_{j\in J_2}C_1^{(j)} =: C_1,$$
where $C_1,C_2>0$ depend only on $\epsilon,c_1,c_2,c_3,a$ and not on $N,t_0,t_1$.
This is the claimed estimate, and completes the proof.
\end{proof}

\begin{rem}\label{sequence}
    Since $C_1,C_2$ do not depend on $N$, it suffices to contradict the decay of Equation~\eqref{eq:L2decay} in a subsequence in $\mathcal N$, as \eqref{eq:L2decay} holds along every subsequence.
\end{rem}

\section{The $(3,2)$ configuration case}\label{sec:3-2config}

In this section, we focus on the proof of our main results for the  $(3,2)$ configurations. In particular, we will establish the following result:

\begin{thm}\label{thm:32}
    Let $0\neq g \in L^2(\R)$. Suppose $\Lambda = \{(0,0),(0,1),(0,-1),(a,b),(a,-b)\}$ with $a b \neq 0$. Then Conjecture~\ref{HRTConjecture} holds for $\Lambda$ and
    $g$.
\end{thm}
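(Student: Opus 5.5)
The plan is to argue by contradiction from the reduced relation \eqref{eq:ch3_relation}, in the normalized coordinates where $\Lambda=\{(0,0),(0,a),(0,-a),(1,v),(1,-v)\}$ with $v=ab\neq0$. Proposition~\ref{ExistingRestriction} (i)--(ii) already covers the case where the spacing $a$ on the collinear line is rational. So we may assume $a\notin\Q$, and replacing $a$ by $-a$ (i.e.\ swapping $c_2,c_3$) we may take $0<\{a\}$ irrational, so Lemma~\ref{Sec3} applies. Iterating $g(t)P(t)=g(t-1)Q(t)$ on the set $S+\Z$, where $P,Q$ do not vanish, gives for $t_0,t_1\in S$ and $N\ge1$
\[
g(t_0+N)=g(t_0)\prod_{n=1}^{N}\frac{P(t_0+n)}{Q(t_0+n)},\qquad
g(t_1-N)=g(t_1)\prod_{n=-N+1}^{0}\frac{Q(t_1+n)}{P(t_1+n)} .
\]
Multiplying the two gives
\[
|g(t_0+N)\,g(t_1-N)|=|g(t_0)g(t_1)|\cdot\frac{\prod_{n=1}^{N}|P(t_0+n)|}{\prod_{n=-N+1}^{0}|P(t_1+n)|}\cdot\frac{\prod_{n=-N+1}^{0}|Q(t_1+n)|}{\prod_{n=1}^{N}|Q(t_0+n)|}.
\]
Shifting the backward range to $-N\le n\le -1$ costs one factor $|P(t_1)|/|P(t_1-N)|$ (and similarly for $Q$). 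I would absorb this by shrinking $S$ so that $|P|,|Q|$ are bounded below on it and on the relevant translates, or by a pigeonhole choice of $t_1$.

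The $P$-ratio is controlled by Lemma~\ref{Sec3} directly. For $N\in\mathcal N(a)$ and $t_0,t_1\in\mathcal P(N)$ it lies in $[C_1^{-1},C_2^{-1}]$. Choose $\epsilon$ small compared with $|S|$; then $S\cap\mathcal P(N)$ has measure at least $|S|-\epsilon>0$ for every $N$, so admissible $t_0,t_1$ exist with $|g(t_0)g(t_1)|\ge\eta>0$ (shrink $S$ so $|g|\ge\eta^{1/2}$ on it). For $Q$ I write $|Q(t)|=|d_1e^{4\pi ivt}+d_2|$, which is a single linear factor in $e^{2\pi i(2v)t}$. If $2v\in\Q$, $Q$ is periodic with some integer period $q$, and I would restrict to $N$ divisible by $q$. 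The $Q$-ratio then becomes a ratio of the same finite periodic product, evaluated at $t_1$ versus $t_0$, and is controlled by restricting $t_0,t_1$ to a subset where $|Q|$ is pinched. If $|d_1|\ne|d_2|$, $\log|Q|$ is smooth and bounded, and its forward and backward sums are compared as in the off-circle case of Lemma~\ref{Sec3}. If $|d_1|=|d_2|$ and $2v\notin\Q$, one wants Proposition~\ref{coro2.3} with $\alpha=\{2v\}$.

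The main obstacle is that Lemma~\ref{Sec3} is tied to the convergent denominators $N\in\mathcal N(a)$, while the $Q$-estimates naturally live on convergents of $2v$. For generic $a,v$ these two sequences need not share infinitely many terms, and Remark~\ref{sequence} only lets us pass to subsequences, not change the sequence. What I would try first is to observe that only a one-sided bound is needed. The goal is $|g(t_0+N)g(t_1-N)|\ge c>0$ for infinitely many $N$, contradicting the uniform decay \eqref{eq:L2decay}. So it suffices that the $Q$-ratio is bounded below along $\mathcal N(a)$ on a set of $(t_0,t_1)$ of positive measure. For this I would mimic Demeter–Zaharescu's $(2,2)$ argument. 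Integrate $\log$ of the $Q$-ratio over $t_0,t_1\in S\cap\mathcal P(N)$. Its mean is zero up to $O(1)$, since the forward and backward sums of $\log|Q|$ have the same average $N\int\log|Q|$ up to bounded boundary terms, by the $L^1$-integrability of $\log|Q|$ and translation invariance. Hence on a positive-measure set of pairs the $Q$-ratio exceeds a fixed constant. Combined with the $P$ bound this gives $|g(t_0+N)g(t_1-N)|\ge c\eta$ for all large $N\in\mathcal N(a)$. This contradicts \eqref{eq:L2decay}, so $g=0$, proving Theorem~\ref{thm:32}.
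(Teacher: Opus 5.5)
Your reduction to $a\notin\Q$ and your use of Lemma~\ref{Sec3} for the $P$-ratio along $\mathcal N(a)$ match the paper. The gap is in the step meant to overcome the obstacle you correctly identify: Lemma~\ref{Sec3} works only along $\mathcal N(a)$, while any direct control of the $Q$-products lives on the convergents of $2v$. Your averaging claim does not follow from translation invariance. That argument gives $\int_0^1\sum_{n=1}^N\log|Q(t+n)|\,dt=\int_1^{N+1}\log|Q|$ only when you average over all of $[0,1]$. Averaging over $E_N=S\cap\mathcal P(N)$ instead produces $\int_1^{N+1}\log|Q(s)|\,\mathbf{1}_{E_N+\Z}(s)\,ds$ and its backward analogue over $[-N+1,1]$. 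Their difference is a discrepancy of Birkhoff sums of $\log|Q|$ for the rotation by $2v$, taken at times $N\in\mathcal N(a)$ that are unrelated to the convergent denominators $q_j$ of $2v$. There is no reason for this to be $O(1)$. Expanding $\log|Q|$ in Fourier series, the mode $k=q_j$ with $q_{j+1}$ comparable to $N$ can enter the forward-minus-backward difference with amplitude of order $q_{j+1}/q_j$, a partial quotient of $2v$. Nothing forces its average over the $N$-dependent set $E_N$ to vanish. The same objection applies to your off-circle subcase: the Riemann-sum argument of Lemma~\ref{Sec3} needs $N$ to be a convergent denominator of the frequency $2v$, so it is unavailable along $\mathcal N(a)$. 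Note also that your averaging step uses nothing about $Q$ beyond integrability of $\log|Q|$. If it were valid it would apply verbatim to $Q$ with several linear factors, which is precisely what this method cannot handle.

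The missing idea is the conjugate trick, which removes the need for any estimate on $Q$. Write $|Q(t)|=|r_1+r_2e^{2\pi i(2vt+\theta')}|$; this is invariant under $2vt+\theta'\mapsto-(2vt+\theta')$. For $v\notin\Q$ the maps $F_{n'}(t)=\{-t-\theta'/v+n'/v\}$ are measure preserving, and ergodicity of the rotation by $1/v$ gives an $n'$ for which $A=S\cap F_{n'}^{-1}(S)$ has positive measure. For $t_0\in A$ and $t_1=F_{n'}(t_0)$ there is an integer $m$ with $|Q(t_1+m-j)|=|Q(t_0+j)|$ for all $j$. Hence the backward $Q$-product from $t_1$ equals the forward $Q$-product from $t_0$ up to a fixed finite number of boundary factors, and only the $P$-ratio survives. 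Lemma~\ref{Sec3} controls it once $t_0,t_1\in\mathcal P(N)$. This is arranged via $G_N=\mathcal P(N)\cap F_{n'}^{-1}(\mathcal P(N))$, $|A\cap G_N|\ge|A|-2\epsilon$, and a $\limsup$ argument giving one $t_0$ that works for infinitely many $N\in\mathcal N(a)$.

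Two smaller repairs are needed. First, for $v\in\Q$ you cannot assume $\mathcal N(a)$ contains infinitely many multiples of $q$; pigeonhole a residue $N\equiv r\pmod q$ instead, and take $t_1=t_0$. With $t_1\neq t_0$ the $Q$-ratio is essentially $(T(t_0)/T(t_1))^{N/q}$, and pinching $T(t_0)/T(t_1)$ near $1$ gives no uniform bound. Second, your iterated formulas have $P$ and $Q$ interchanged: from $g(t)P(t)=g(t-1)Q(t)$ one gets $g(t_0+N)=g(t_0)\prod_{n=1}^{N}Q(t_0+n)/P(t_0+n)$. This reverses the one-sided $Q$-bound you actually need.
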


Throughout this section $g,\Lambda,P,Q, \mathcal P, \mathcal N$ and $S$ are as constructed in
Section~\ref{sec:2}, and we set $v:=ab$. We prove Theorem~\ref{thm:32} by
treating the three regimes left open by Proposition~\ref{ExistingRestriction}:
(1) $a\notin\Q$, $b\in\Q$; (2) $a, b\notin\Q$, $v\in\Q$; and (3) $a,b,v\notin\Q$.
Together with the cases settled in \cite{okoudjou2019extension}, these establish
the $(3,2)$ configuration for every $v\neq0$ and every
$g\in L^2(\R)$. Notice that taking
moduli on both sides of Equation~\eqref{eq:ch3_relation} yields
\begin{equation}\label{eq:ch4_ab_modulus}
    |P(t)g(t)| = |Q(t)g(t-1)|, \qquad t\in S.
\end{equation}

\subsection{The case $a\notin\Q$, $b\in\Q$}\label{subsec3.1}

\begin{proof}
Suppose that $b = \frac{p}{q} \in \Q$. Then $\{a,0\}\in a\Z$ and
$\{\pm1,\pm\frac pq\}\in\frac1q\Z$, so $\Lambda\subset A\Z^2$ with
$A=\begin{bmatrix}a&0\\0&\frac1q\end{bmatrix}$ full-rank. It follows from
\cite{linnell1999neumann} that Conjecture~\ref{HRTConjecture} holds in this case.
\end{proof}

\subsection{The case $a, b\notin\Q$, $v\in\Q$}\label{subsec:3.2}

\begin{proof}
    
Suppose that $a\notin\Q$ and $v=\frac pq\in\Q$ with $p,q$ coprime. Then $|Q|$ is
$q$-periodic on $S+\Z$,
\begin{equation}\label{eq:q-periodic}
    |Q(t+ j \pm q)| =|Q(t+j)|, \quad \forall j \in \Z,
\end{equation}
so $T(t):=\prod_{j=0}^{q-1}|Q(t+j)|$ is continuous, positive, bounded and
$q$-periodic on $S+\Z$ (which contains no zero of $Q$).

Fix $0<\epsilon<\tfrac12$ and enumerate
$
\mathcal N=\{N_\ell:\ell\geq1\}.$ Let $\mathcal P(N_\ell)$ be the sets given by Lemma~\ref{Sec3}. Since
$|\mathcal P(N_\ell)|\geq1-\epsilon$, we have
$$ |S\cap\mathcal P(N_\ell)|
\geq |S|-\epsilon>0$$
for every $\ell \geq 1$. 

 Since $[0,1]$ has finite measure, we have 
\[
\left|
\limsup_{\ell\to\infty}
\bigl(S\cap\mathcal P(N_\ell)\bigr)
\right|
\geq
\limsup_{\ell\to\infty}
|S\cap\mathcal P(N_\ell)|
\geq |S|-\epsilon>0.
\]

Consequently, there exist $t_0\in S$ and an infinite set
$L\subset\mathbb N$ such that
\[
t_0\in\mathcal P(N_\ell),\qquad \ell\in L.
\]
Passing to a further infinite subset of $L$, we may assume that there exists
a fixed $r\in\{0,\ldots,q-1\}$ such that
\[
N_\ell\equiv r\pmod q,\qquad \ell\in L.
\]
Thus, for $\ell\in L$, we may write
\[
N_\ell=k_\ell q+r,
\]
where $k_\ell\to\infty$.

Iterating \eqref{eq:ch4_ab_modulus} yields:
\begin{equation}\label{eq:4.1.2forward}
    |g(t_0 + N_\ell)| = |g(t_0 -1)|\cdot \frac{\prod_{j=0}^{N_\ell}|Q(t_0+j)|}{\prod_{j=0}^{N_\ell}|P(t_0+j)|}
\end{equation}
and
\begin{equation}\label{eq:4.1.2backward}
    |g(t_0 - N_\ell)| = |g(t_0 -1)|\cdot \frac{\prod_{j=-N_\ell+1}^{0}|P(t_0+j)|}{\prod_{j=-N_\ell+1}^{0}|Q(t_0+j)|}.
\end{equation}
Multiplying \eqref{eq:4.1.2forward} and \eqref{eq:4.1.2backward},
\begin{equation}\label{eq:4.1.2backfor}
    |g(t_0 +N_\ell)|\cdot |g(t_0-N_\ell)|= |g(t_0-1)|^2 \cdot \frac{\prod_{j=-N_\ell+1}^{0}|P(t_0+j)|}{\prod_{j=0}^{N_\ell} |P(t_0+j)|}\cdot \frac{\prod_{j=0}^{N_\ell}|Q(t_0+j)|}{\prod_{j=-N_\ell+1}^{0} |Q(t_0+j)|}.
\end{equation}
With  $N_\ell = k_\ell q+r$, $\ell\in\N$ and using the $q$-periodicity of $Q$,
$$\begin{cases}
    
\prod_{j=0}^{k_\ell q+ r}|Q(t_0+j)| = \prod_{j=0}^r|Q(t_0+j)|\, T(t_0)^{k_\ell},\\
  \prod_{j=-k_{\ell}q - r+1}^{0}|Q(t_0+j)| = \prod_{j=0}^{k_{\ell}q+ r-1}|Q(t_0-j)| = \prod_{j=0}^{r-1}|Q(t_0-j)|\, T(t_0)^{k_\ell},
  \end{cases}$$
 the $Q$-ratio in \eqref{eq:4.1.2backfor} equals $$ \tfrac{\prod_{j=0}^r|Q(t_0+j)|}{\prod_{j=0}^{r-1}|Q(t_0-j)|}$$ which is independent of
$\ell$. By Lemma~\ref{Sec3} the $P$-ratio is bounded below, so there is $C>0$ with
$$|g(t_0+ N_\ell)| \cdot |g(t_0- N_\ell)|\geq |g(t_0-1)|^2 \cdot C \cdot \tfrac{\prod_{j=0}^r|Q(t_0+j)|}{\prod_{j=0}^{r-1}|Q(t_0-j)|}>0
  \qquad(\ell \in\N),$$
contradicting the decay \eqref{eq:L2decay}. Hence $\cG(g,\Lambda)$ is linearly
independent.
\end{proof}

\subsection{The case $a,b,v\notin\Q$}\label{subsec:3.3}

 \begin{proof}
Suppose that $a,b,v\notin\Q$. Write
$Q(t) = e^{2\pi i(-vt+\theta)}\bigl(r_1 + r_2e^{2\pi i(2vt+\theta')}\bigr)$ with
$r_1,r_2\in(0,\infty)$ and $\theta,\theta'\in[0,1)$. The proof is based on the \emph{conjugate trick} introduced in \cite{demeter2010linear}.

{\underline{Step 1: Choosing a pair of related points $(t_0, t_1)$.}} For $n'\in\N$ define the function $F_{n'}$ on $[0,1]$ to itself by 
$$F_{n'}(t)=\{-t-\tfrac{\theta'}{v}+\tfrac{n'}{v}\},$$
where, we recall, $\{x\}$ is the fractional part of $x$. Each $F_{n'}$ is measure-preserving on $[0,1]$. Since $v\notin\Q$, rotation by $1/v$ is ergodic. Thus, 
$$\frac1K\sum_{n'=0}^{K-1}\bigl|S\cap F_{n'}^{-1}(S)\bigr| \xrightarrow[K\to\infty]{} |S|\cdot|S|=|S|^2>0.$$
Hence there exists a sufficiently large $n'$ such that $$A:=\{t\in S:F_{n'}(t)\in S\}$$
has positive measure. Choose such an $n'$ sufficiently large that the integer $m$ defined below is nonnegative.  For every $t_0\in A$, set $t_1=F_{n'}(t_0) \in S$ and write  $m=-t_0-\tfrac{\theta'}{v}+\tfrac{n'}{v}-t_1\in\Z$, then for all $j$,
$$2v(t_1+m-j)+\theta' \equiv -\bigl(2v(t_0+j)+\theta'\bigr) \pmod 1.$$
Consequently $|Q(t_1+m-j)|=|Q(t_0+j)|$, which is an identity we will use below.\\

\underline{Step 2: Multiply the orbits.}

Iterating
\eqref{eq:ch4_ab_modulus}, for all $N>m$,
\begin{equation}\label{eq:ch5_forward_g}
    |g(t_0 + N)| = |g(t_0 -1)|\frac{\prod_{j=0}^{N}|Q(t_0 +j)|}{\prod_{j=0}^{N}|P(t_0 +j)|},
\end{equation}
and
\begin{equation}\label{eq:ch5_backward_g}
    |g(t_1 - N + m-1)| = |g(t_1 -1)|\frac{\prod_{j=-N+m}^{-1}|P(t_1 +j)|}{\prod_{j=-N+m}^{-1}|Q(t_1 +j)|}.
\end{equation}
From the construction of $m$,
$\prod_{n=-N+m}^{m}|Q(t_1 +n)| = \prod_{n=0}^N|Q(t_0+n)|$, so
$$\prod_{n=-N+m}^{-1}|Q(t_1 +n)| = K\prod_{n=-N+m}^{m}|Q(t_1 +n)|,\qquad
  K=\Bigl(\prod_{n=0}^m |Q(t_1 +n)|\Bigr)^{-1}\in(0,\infty),$$
where $K$ is a positive finite constant (since $t_1\in S$) depending only on
$m,t_0,n',v,\theta'$. Multiplying \eqref{eq:ch5_forward_g} and
\eqref{eq:ch5_backward_g},
\begin{equation}\label{thm2final}
    |g(t_0+N)||g(t_1 - N +m-1)| = \frac{|g(t_0-1)||g(t_1-1)|}{K} \cdot \frac{\prod_{j=-N+m+1}^{-1} |P(t_1+j)|}{\prod_{j=0}^N |P(t_0+j)|}.
\end{equation}

We will now apply
Lemma~\ref{Sec3} with $\epsilon<\tfrac{|A|}{4}$. For each $N\in\mathcal N  $ define 
$$G_N := \mathcal P(N)\cap F_{n'}^{-1}\bigl(\mathcal P(N)\bigr) .$$
Since $F_{n'}$ is measure-preserving, $|F_{n'}^{-1}(\mathcal P(N))|=|\mathcal P(N)|\ge1-\epsilon$,
so $|G_N|\ge1-2\epsilon$ and therefore $|A\cap G_N|\ge|A|-2\epsilon>0$ for every
$N\in\mathcal N$. As $[0,1]$ has finite measure, the reverse Fatou's lemma gives
$$\Bigl|\limsup_{N\in\mathcal N}\bigl(A\cap G_N\bigr)\Bigr|
 \;\ge\;\limsup_{N\in\mathcal N}\bigl|A\cap G_N\bigr|
 \;\ge\;|A|-2\epsilon\;>\;0 .$$
In particular there exist $t_0\in A$ and an infinite set
$\mathcal N_{t_0}\subset\mathcal N$ such that $t_0\in G_N$ for all
$N\in\mathcal N_{t_0}$, that is, both $t_0$ and $t_1=F_{n'}(t_0)$ lie in $\mathcal P(N)$ for every $N\in\mathcal N_{t_0}$. Hence
Lemma~\ref{Sec3} bounds the ratio:
\begin{equation}\label{eq:lem-ratio}
    \frac{\prod_{j=-N}^{-1}|P(t_1+j)|}{\prod_{j=1}^{N}|P(t_0+j)|}\;\ge\;C_2.
\end{equation}

The products appearing in \eqref{thm2final} differ from these only in their
ranges, and we account for the difference by boundary factors. For the
denominator, 

$$\prod_{j=0}^{N}|P(t_0+j)| = |P(t_0)|\prod_{j=1}^{N}|P(t_0+j)|$$
where $0<|P(t_0)|\le\|P\|_\infty$ since $t_0\in S$ and $S+\Z$ contains no zero of
$P$. For the numerator, the backward product in \eqref{thm2final} is shorter
than the one in \eqref{eq:lem-ratio} by the $m+1$ factors $j=-N,\dots,-N+m$,
each at most $\|P\|_\infty$:
$$\prod_{j=-N+m+1}^{-1}|P(t_1+j)|
  = \frac{\prod_{j=-N}^{-1}|P(t_1+j)|}{\prod_{j=-N}^{-N+m}|P(t_1+j)|}
  \;\ge\;\frac{1}{\|P\|_\infty^{m}}\prod_{j=-N}^{-1}|P(t_1+j)|.$$
Combining these with \eqref{eq:lem-ratio},
$$\frac{\prod_{j=-N+m+1}^{-1}|P(t_1+j)|}{\prod_{j=0}^{N}|P(t_0+j)|}
  \;\ge\;\frac{1}{\|P\|_\infty^{m+1}\,|P(t_0)|}\cdot
  \frac{\prod_{j=-N}^{-1}|P(t_1+j)|}{\prod_{j=1}^{N}|P(t_0+j)|}
  \;\ge\;\frac{C_2}{\|P\|_\infty^{m}\,|P(t_0)|}\;>\;0,$$
which is a positive constant independent of $N$. Hence \eqref{thm2final} gives
$$|g(t_0+N)||g(t_1-N+m-1)|\;\ge\;
  \frac{|g(t_0-1)||g(t_1-1)|}{K}\cdot\frac{C_2}{\|P\|_\infty^{m}\,|P(t_0)|}\;>\;0$$
for all such $N$, contradicting the decay~\eqref{eq:L2decay} as $N\to\infty$.
Hence $\cG(g,\Lambda)$ is linearly independent.
\end{proof}

Theorem~\ref{thm:32} has an important consequence for the classical four-point problem. When the generating function is real-valued, a hypothetical linear dependence associated with an arbitrary four-point configuration can be transformed into a linear dependence for an associated symmetric $(3,2)$ configuration. Theorem~\ref{thm:32} rules out the latter dependence and therefore yields the following corollary. In particular, this establishes, to the best of our knowledge, the first result covering all four-point configurations under the sole additional assumption that the generating function is real-valued, Corollary~\ref{cor:4pt}, whose proof is derived from \cite{okoudjou2019extension}, and is included below for completeness.

\begin{proof}[\textbf{Proof of Corollary~\ref{cor:4pt}}]
    Suppose by contradiction that there exist nonzero coefficients $c_1,c_2,c_3$ and a nonzero real-valued $g \in L^2(\R)$ such that
    $$T_s g = c_1 g + c_2M_1 g +c_3 M_bT_a g.$$
    It follows that 
    $$T_s g = \overline{c_1} g + \overline{c_2}M_{-1} g +\overline{c_3} M_{-b}T_a g$$ where the frequency parameter $b$ was transformed to $-b$ through the reflection across the $x$-axis. 
    Hence 
    $$(c_1 -\overline{c_1})g + c_2M_1 g -\overline{c_2}M_{-1}g +c_3 M_bT_a g -\overline{c_3} M_{-b}T_a g=0.$$
    Consequently, $\cG(g,\Lambda)$ is linearly dependent with $\Lambda = \{ (0,0),(0,1),(0,-1),(a,b),(a,-b)\}$, which contradicts Theorem~\ref{thm:32}.
\end{proof}

The next corollary shows that Corollary~\ref{cor:4pt} is stable under multiplication
by quadratic phase factors.

\begin{cor}\label{cor:4ptphase} Let $\tilde{\Lambda} = \{(0,0),(0,1),(s,0),(a,b)\}$ be such that $a,b,s \neq 0$, and let  $0\neq g\in L^2(\R)$ be real-valued. Suppose that Conjecture~\ref{HRTConjecture} holds
    for $\tilde{\Lambda}$ and $g$.
    If $\varphi(x)=p_2x^2+p_1x+p_0$ is a real polynomial of degree at most $2$ and  $h(x)=e^{2\pi i \varphi(x)}g(x)$, then 
Conjecture~\ref{HRTConjecture} holds
    for $\tilde{\Lambda}$ and $h$.
\end{cor}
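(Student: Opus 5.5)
The plan is to show that multiplication by $e^{2\pi i\varphi}$ is a unitary (metaplectic-type) operator $U$ on $L^2(\R)$ that intertwines time-frequency shifts with time-frequency shifts, up to unimodular scalars. A linear dependence for $\cG(h,\tilde\Lambda)$ then transfers to a linear dependence for $\cG(g,\Lambda')$ for an explicitly computed configuration $\Lambda'$. I expect the main obstacle to be identifying $\Lambda'$ with a configuration covered by the hypothesis.

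\emph{Step 1: commutation relations.} Write $U f=e^{2\pi i\varphi}f$, so $h=Ug$. Expanding $x^2=(x-a)^2+2ax-a^2$ and $p_1x=p_1(x-a)+p_1a$ gives
\[
U M_bT_a f(x)=e^{2\pi i(b+2p_2a)x}\,e^{2\pi i(p_1a-p_2a^2)}\,(Uf)(x-a),
\]
that is, $UM_bT_a=e^{2\pi i(p_1a-p_2a^2)}M_{b+2p_2a}T_aU$. Equivalently, $M_bT_a h=\gamma_{a,b}\,U M_{b-2p_2a}T_a g$ with $|\gamma_{a,b}|=1$.

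\emph{Step 2: transfer the dependence.} Suppose $\sum_k c_kM_{b_k}T_{a_k}h=0$ with $(a_k,b_k)\in\tilde\Lambda$ and not all $c_k$ zero. Applying $U^{-1}$ yields $\sum_k c_k\gamma_k M_{b_k-2p_2a_k}T_{a_k}g=0$. The shear $(a,b)\mapsto(a,b-2p_2a)$ is injective, so the four image points
\[
\Lambda'=\{(0,0),(0,1),(s,-2p_2s),(a,b-2p_2a)\}
\]
are distinct. Hence $\cG(g,\Lambda')$ would be linearly dependent, with the same pattern of nonzero coefficients. The constant $p_0$ only contributes a global scalar and the linear term $p_1$ only contributes phases, so only $p_2$ changes the geometry.

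\emph{Step 3: identify $\Lambda'$ (main obstacle).} It remains to show that $\Lambda'$ is covered by the hypothesis. When $p_2=0$ we have $\Lambda'=\tilde\Lambda$ and we are done. When $p_2\neq0$, the third point $(s,-2p_2s)$ acquires a nonzero frequency, so $\Lambda'$ is no longer literally of the form $\{(0,0),(0,1),(s',0),(a',b')\}$.

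I would try first to normalize $\Lambda'$ back to this form using only operations that keep $g$ real-valued: time translations $T_{x_0}$, dilations $(a,b)\mapsto(\lambda a,b/\lambda)$, the reflection $x\mapsto -x$, and the conjugation trick from the proof of Corollary~\ref{cor:4pt}, which replaces every frequency $b$ by $-b$ when $g$ is real. Failing that, I would rerun the argument of Corollary~\ref{cor:4pt} directly for $\Lambda'$. Isolating $T_sM_{-2p_2s}g$ and conjugating gives two expressions for it, one involving $M_1g$ and one involving $M_{-1}g$. To compare them I would apply $M_{2p_2 s}$ to both, which keeps the two vertical points on the line $x=0$ symmetric about frequency $2p_2 s$. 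The goal is to reach a symmetric configuration on two parallel lines and invoke Theorem~\ref{main1}, via its commensurable-spacing version, after a frequency recentering.

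The delicate point is that this recentering uses a modulation, which destroys real-valuedness. So the symmetry must be extracted before recentering, exactly as in Corollary~\ref{cor:4pt}, and I would check carefully whether the resulting two-line configuration still has the required commensurable spacings.
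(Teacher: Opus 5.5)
Your Steps 1 and 2 are correct and match the paper's argument. The parameters $p_0,p_1$ contribute only unimodular factors, so $p_2=0$ follows from the hypothesis on $(\tilde\Lambda,g)$. For $p_2\neq0$ the chirp is the metaplectic operator of a shear, which reduces the problem to $\mathcal{G}(g,\Lambda')$ with $\Lambda'=\{(0,0),(0,1),(s,\sigma),(a,\beta)\}$, where $\sigma=-2p_2s\neq0$ and $\beta=b-2p_2a$.

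The gap is Step 3: you leave it open, and your fallback does not close it. Carried out, the fallback goes as follows.
\begin{itemize}
\item Normalize the dependence as $M_\sigma T_sg=c_1g+c_2M_1g+c_3M_\beta T_ag$.
\item Conjugate to get $M_{-\sigma}T_sg=\overline{c_1}g+\overline{c_2}M_{-1}g+\overline{c_3}M_{-\beta}T_ag$.
\item Apply $M_{-2\sigma}$ to the first relation, subtract, and modulate by $M_\sigma$.
\end{itemize}
This gives a dependence on a $(4,2)$ configuration with frequencies $\pm\sigma,\pm(1-\sigma)$ on $x=0$ and $\pm(\beta-\sigma)$ on $x=a$. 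The spacings include both $1$ and $2\sigma$, so Theorem~\ref{main1} applies only when $p_2s\in\Q$. Isolating $(a,\beta)$ instead requires $\beta\in\Q$. Isolating $(0,0)$ or $(0,1)$ leaves points on three vertical lines when $s\neq a$.

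Your concern that the recentering modulation destroys real-valuedness is misplaced. Modulating a whole relation only translates the configuration and leaves $g$ unchanged, and Theorem~\ref{main1} does not need real-valuedness. The real obstruction is commensurability. So for $p_2\neq0$ your route proves the statement only under arithmetic side conditions.

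The paper does not supply the missing idea either. It closes this step by asserting that $A_\varphi^{-1}\tilde\Lambda=\Lambda'$ ``is again a four-point configuration of the form covered by Corollary~\ref{cor:4pt}.'' As you noticed, this is not true in general when $p_2\neq0$. None of the normalizations that keep $g$ real-valued repairs it:
\begin{itemize}
\item translating the whole configuration;
\item dilating $g$, which acts by $(x,\xi)\mapsto(\lambda x,\xi/\lambda)$;
\item reflecting $g$, which acts by $(x,\xi)\mapsto(-x,-\xi)$;
\item the conjugation trick, which acts by $(x,\xi)\mapsto(x,-\xi)$.
\end{itemize}
All of these preserve the property that two points share a frequency coordinate. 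The configuration in Corollary~\ref{cor:4pt} has such a pair, whereas $\Lambda'$ has one only if $2p_2s=-1$, $b=2p_2a$, $b-2p_2a=1$, or $b=2p_2(a-s)$.

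So your diagnosis of where the difficulty lies is accurate. For generic parameters, however, the case $p_2\neq0$ is proved neither by your proposal nor by the paper's argument. What you have is a proof of the case $p_2=0$, together with special subcases of $p_2\neq0$ such as $p_2s\in\Q$, $b-2p_2a\in\Q$, or $s=a$ (the last being a $(2,2)$ configuration).
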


\begin{proof}
Since $e^{2\pi i\varphi(x)}
=
e^{2\pi i p_0}
e^{2\pi i p_1x}
e^{2\pi i p_2x^2},$ it suffices to consider separately the linear and quadratic factors.

Suppose first that $p_2=0$. Then $h=e^{2\pi i p_0}M_{p_1}g.$

If $\mathcal G(h,\tilde\Lambda)$ were linearly dependent, there would exist
$c_1,\ldots,c_4\in\C$, not all zero, such that
\[
c_1h+c_2M_1h+c_3T_sh+c_4M_bT_ah=0.
\]
Since modulation is unitary, $M_{p_1}^{-1}T_sM_{p_1}
=
e^{-2\pi i p_1s}T_s,$
and $M_{p_1}^{-1}M_bT_aM_{p_1}
=
e^{-2\pi i ap_1}M_bT_a,$ 
this is equivalent to
\[
c_1g
+c_2M_1g
+c_3e^{-2\pi i p_1s}T_sg
+c_4e^{-2\pi i ap_1}M_bT_ag
=0,
\]
contradicting the assumed linear independence of
$\mathcal G(g,\tilde\Lambda)$.

Now suppose $p_2\neq0$. Let $U_\varphi f(x)=e^{2\pi ip_2x^2}f(x),$
which is the metaplectic operator corresponding to the symplectic matrix
\[
A_\varphi=
\begin{bmatrix}
1&0\\
2p_2&1
\end{bmatrix},
\]
see \cite[Example~9.4.1(c)]{Groc2001} and
\cite[Section~9.5.1]{heil2006linear}. Since metaplectic operators preserve
linear independence of finite Gabor systems,
\[
\mathcal G(h,\tilde\Lambda)
\text{ is linearly independent }
\Longleftrightarrow
\mathcal G(g,A^{-1}_\varphi\tilde\Lambda)
\text{ is linearly independent.}
\]
Moreover,
\[
A^{-1}_\varphi\tilde\Lambda
=
\{
(0,0),
(0,1),
(s,-2p_2s),
(a,b-2p_2a)
\},
\]
which is again a four-point configuration of the form covered by
Corollary~\ref{cor:4pt}. Hence
$\mathcal G(g,A_\varphi^{-1}\tilde\Lambda)$ is linearly independent, completing the proof.
\end{proof}

\begin{rem}
Corollaries~\ref{cor:4pt} and~\ref{cor:4ptphase} suggest that the remaining
difficulty in the four-point HRT conjecture lies in understanding the phase of
the generating function. Indeed, every nonzero function $g\in L^2(\R)$ can be
written on $E=\{x\in\R:g(x)\neq0\}$
as
\[
g(x)=|g(x)|e^{2\pi i\varphi(x)},
\]
where $\varphi:E\to\R$ is measurable. Since Corollary~\ref{cor:4pt} settles the
case of the nonnegative function $|g|$, and Corollary~\ref{cor:4ptphase}
shows that quadratic phases preserve linear independence, the essential
remaining problem is to understand the effect of more general phase functions.

\end{rem}

\section{Symmetric configurations with commensurable spacings}\label{sec:2n1}

The proof of Theorem~\ref{thm:32} depends only on the structure of the trigonometric polynomial associated with the collinear points and not on their number. More precisely, the only property of the polynomial $P$ used in Sections~\ref{subsec3.1}--\ref{subsec:3.3} is that the equally spaced points make it a polynomial in $\omega=e^{2\pi iat}$, and hence a product of linear factors. Since Lemma~\ref{Sec3} is proved one root at a time, its degree plays no role. On the other hand, the polynomial $Q$ continues to consist of two terms, so the telescoping argument of Section~\ref{subsec:3.2} and the conjugate-trick argument of Section~\ref{subsec:3.3} carry over without modification. This also explains the limitation of the present method: for general $(m,2)$ configurations with unequally spaced collinear points, the polynomial $P$ no longer admits such a one-variable factorization.

For fixed $n\geq1$ let
\begin{equation*}\label{eq:lambdan}
    \Lambda_n = \{ (0,k) : -n \leq k \leq n \} \cup \{ (a,b), (a,-b) \},
    \qquad a,b \neq 0.
\end{equation*}
As before assume $a,b>0$; after the scaling transform a dependence relation
reads, for a.e.\ $t\in\R$,
\begin{equation}\label{eq:ch6_relation}
\begin{cases}

    g(t)P_n(t) = g(t-1)Q(t)\\
    P_n(t) = \sum_{k=-n}^{n} c_k e^{2\pi ikat}\\
    Q(t) = d_1 e^{2\pi i vt} + d_2 e^{-2\pi ivt},
    \end{cases}
\end{equation}

with $c_k,d_j\in\C$ and $v =ab$; for $n=1$ this is \eqref{eq:ch3_relation}. The set $S$ is
constructed exactly as in Section~\ref{sec:2}.

\subsection{A degree-free form of Lemma~\ref{Sec3}}

\begin{lemma}[Extension of Lemma~\ref{Sec3}]\label{Sec6}
    Let $0<a<1$ be irrational, $n\geq1$, and
    $P_n(t)=\sum_{k=-n}^{n}c_k e^{2\pi ikat}\not\equiv0$. For each
    $0<\epsilon<\frac12$ there exist $C_1,C_2>0$ depending only on $\epsilon$,
    $(c_k)$ and $a$ (not on $N$), and for each $N\in\mathcal N(a)$ a set
    $\mathcal P_n(N)\subset[0,1]$ of measure at least $1-\epsilon$, such that for
    all $N\in\mathcal N(a)$ and $t_0,t_1\in\mathcal P_n(N)$,
    $$C_2 \leq \frac{\prod_{n'=-N}^{-1}|P_n(t_1+n')|}{\prod_{n'=1}^{N}|P_n(t_0+n')|} \leq C_1.$$
\end{lemma}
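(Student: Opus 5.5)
The plan is to repeat the proof of Lemma~\ref{Sec3} one root at a time, so that the degree of $P_n$ only affects the number of exceptional sets and the size of the constants. With $\omega=e^{2\pi iat}$ we have $e^{2\pi inat}P_n(t)=R(\omega):=\sum_{k=-n}^{n}c_k\omega^{k+n}$. Here $R$ is a nonzero polynomial of degree at most $2n$. Let $r\ge0$ be the order of its zero at $\omega=0$ (the smallest index $k+n$ with $c_k\neq0$), and let $\ell\ge r$ be its top degree. Then
$$R(\omega)=c_{\ell-n}\,\omega^{r}\prod_{j=1}^{\ell-r}(\omega-\rho_j),\qquad \rho_j\neq0 .$$
Since $|\omega|=1$ on $\R$, we get $|P_n(t)|=|c_{\ell-n}|\prod_{j}|e^{2\pi iat}-\rho_j|$. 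The constant $|c_{\ell-n}|^{N}$ cancels in the ratio because both products have exactly $N$ factors. Hence the ratio in the statement splits into at most $2n$ factors, one for each root $\rho_j$, listed with multiplicity. Repeated roots cause no trouble, since each copy is handled independently.

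For a root with $|\rho_j|=1$, write $\rho_j=e^{2\pi is_j}$. I will use the same exceptional set $B_{\delta,j,N}=\{t\in[0,1]:\{s_j-at\}\in\widetilde B_{\delta,N}\}$ as in Lemma~\ref{Sec3}, whose measure is at most $6C_a\delta$. For $t\notin B_{\delta,j,N}$, Proposition~\ref{prop2.1} gives bounds of the form \eqref{eq:circle-bounds} on both the forward and backward products, with constants $p_1^{(j)},p_2^{(j)}$ depending only on $\delta$ (hence on $\epsilon$ and $a$). For a root with $|\rho_j|\neq1$, the Riemann-sum argument leading to \eqref{eq:offcircle-ratio-2} applies verbatim. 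It needs only that $\phi_j=\ln|e^{2\pi i\cdot}-\rho_j|$ is smooth with $\|\phi_j'\|_\infty<\infty$, together with the convergent estimate $N\langle Na\rangle\le1$. It produces bounds $C_1^{(j)},C_2^{(j)}$ that hold for all $t_0,t_1\in[0,1]$ and need no exceptional set.

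To combine the two cases, let $J_1$ be the set of unimodular roots, so $|J_1|\le 2n$. Put $\delta=\epsilon/(12nC_a)$ (shrinking $\epsilon$ if necessary so that $\delta<1/100$) and define $\mathcal P_n(N)=[0,1]\setminus\bigcup_{j\in J_1}B_{\delta,j,N}$. Its complement has measure at most $2n\cdot6C_a\delta=\epsilon$. Multiplying the per-root bounds gives $C_2=\prod_{J_1}p_2^{(j)}/p_1^{(j)}\prod_{J_2}C_2^{(j)}$, and $C_1$ is defined analogously. These constants depend only on $\epsilon$, the roots (hence on $(c_k)$), $a$, and $n$; the dependence on $n$ is implicit in $(c_k)$.

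The main thing to get right is the factorization. The trigonometric polynomial has negative frequencies and its extreme coefficients may vanish, so one must clear denominators and account for a possible power of $\omega$ as above. Otherwise one would divide by a coefficient that is zero, or miss the zero root, which contributes only modulus one. Everything else is a direct transplant of Lemma~\ref{Sec3}. The one remaining check is that $C_a$ and the off-circle Riemann-sum constants do not depend on $N$; they do not, because the estimates on each factor are exactly those already established.
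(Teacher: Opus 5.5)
Your proposal is correct and follows essentially the same route as the paper. Both clear the negative frequencies to get a polynomial in $\omega=e^{2\pi iat}$ and discard the harmless power of $\omega$; the paper equivalently multiplies by $\omega^{-k'}$, where $k'$ is the lowest nonvanishing index. Both then bound each root's factor separately using the two cases of Lemma~\ref{Sec3}, removing one exceptional set per unimodular root with $\delta$ scaled by the degree.

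One small point you inherit from Lemma~\ref{Sec3}. With $u=\{s_j-at\}$, the backward product corresponds to the point $\{-u\}$ in Proposition~\ref{prop2.1}, whose hypothesis involves $\langle u+ka\rangle$ rather than $\langle u-ka\rangle$. So you should also remove $\{u:\min_{1\le k\le N}\langle u+ka\rangle<\delta/N\}$, which gives $8C_a\delta$ rather than $6C_a\delta$ per root; this only changes the choice of $\delta$.
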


\begin{proof}
    Set $k'=\min\{k:c_k\ne0\}$, $k''=\max\{k:c_k\ne0\}$, $d=k''-k'\leq2n$. With
    $\omega=e^{2\pi iat}$, the polynomial
    $\omega^{-k'}P_n=\sum_{l=0}^{d}c_{k'+l}\omega^{l}=c_{k''}\prod_{j=1}^{d}(\omega-\rho_j)$
    has degree $d$ and nonzero constant term, so every $\rho_j\ne0$, and since
    $|\omega|=1$ on $\R$,
    \begin{equation}\label{eq:factored}
        |P_n(t)| = |c_{k''}|\prod_{j=1}^{d}\big|e^{2\pi iat}-\rho_j\big|.
    \end{equation}
    If $d=0$ the claim is trivial. Otherwise the quotient factors over the
    roots (the powers of $|c_{k''}|$ cancelling), and it suffices to bound each
    factor. Each factor is exactly one of the two situations treated in
    Lemma~\ref{Sec3}: for $|\rho_j|=1$ its proof produces, for
    $0<\delta<\frac1{100}$, a set $B_{\delta,j,N}$ with
    $|B_{\delta,j,N}|\le 6 C_a \delta$ off which Proposition~\ref{prop2.1} gives
    two-sided bounds independent of $N$; for $|\rho_j|\ne1$ its Riemann-sum
    argument bounds the factor's ratio by $e^{\pm5\|\phi_j'\|_\infty}$ for all
    $t_0,t_1\in[0,1]$, with no restriction on the good set. Taking
    $\delta=\frac{\epsilon}{6dC_a}$ and
    $\mathcal P_n(N)=[0,1]\setminus\bigcup_{j\in J_1}B_{\delta,j,N}$, where
    $J_1=\{j:|\rho_j|=1\}$, gives $|\mathcal P_n(N)|\geq1-\epsilon$; multiplying
    the $d\leq2n$ factors yields the claim.
\end{proof}

\begin{rem}\label{rem:notuniform}
    The constants of Lemma~\ref{Sec6} depend on $d$, hence on $n$; this is
    harmless since $n$ is fixed. Note also that Lemma~\ref{Sec6} is insensitive
    to which coefficients $c_k$ vanish, requiring only $P_n\not\equiv0$.
\end{rem}

\subsection{Vanishing coefficients}\label{subsec:degen}

In Theorem~\ref{thm:32} the nonvanishing of the coefficients followed from known
$(2,2)$ and $(1,3)$ results, but for $n\geq2$ a vanishing $c_k$ yields a
configuration $\{(0,k):k\in F\}\cup\{(a,\pm b)\}$ that need not be any
$\Lambda_m$. Lemma~\ref{Sec6}, however, assumes nothing about which $c_k$
vanish as long as $P_n\not\equiv0$; so it remains to control the $d_j$.

\begin{lemma}\label{lem:vanish}
    Let $0\neq g\in L^2(\R)$ satisfies \eqref{eq:ch6_relation}.
    \begin{enumerate}
        \item If all $c_k=0$, or $d_1=d_2=0$ then $g=0$.
        \item If exactly one of $d_1,d_2$ is nonzero then no such $g$ exists for any $b\neq0$.
    \end{enumerate}
\end{lemma}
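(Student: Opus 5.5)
The plan is to argue case by case directly from the pointwise relation \eqref{eq:ch6_relation}, $g(t)P_n(t)=g(t-1)Q(t)$ for a.e.\ $t$. Two facts about the functions involved will be used throughout. A trigonometric polynomial that is not identically zero vanishes only on a set of measure zero; it is real-analytic, so its zero set is discrete. Moreover, $Q$ is a single exponential when exactly one $d_j$ is nonzero, and then $|Q|$ is a positive constant.

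\textbf{Part (1).} Suppose first that all $c_k=0$, so $P_n\equiv0$. If also $d_1=d_2=0$, the relation is trivial; the statement should then be read as saying that a genuine dependence relation has some nonzero coefficient. So assume $Q\not\equiv0$. Then $g(t-1)Q(t)=0$ a.e., and since $Q$ vanishes only on a null set, $g(t-1)=0$ a.e., that is, $g=0$.

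Suppose next that $d_1=d_2=0$ and $P_n\not\equiv0$. Then $g(t)P_n(t)=0$ a.e., and because $P_n$ has only a null zero set, $g=0$ a.e. (If both $P_n$ and $Q$ vanish identically, every coefficient of the dependence relation is zero, which is excluded.) In both subcases we contradict $g\neq0$.

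\textbf{Part (2).} Say $d_1\neq0$ and $d_2=0$; the other case is symmetric. Then $|Q(t)|=|d_1|$, and by Part (1) we may assume $P_n\not\equiv0$. Taking moduli gives
\[
|g(t-1)|=|g(t)|\,|P_n(t)|/|d_1| .
\]
I would handle this with a recurrence argument. Iterate as in \eqref{eq:4.1.2forward}--\eqref{eq:4.1.2backward} along $t+\Z$ for $t$ in the set $S$ of Section~\ref{sec:2}. Multiplying the forward and backward identities then expresses $|g(t_0+N)|\,|g(t_0-N)|$ as $|g(t_0-1)|^2$ times
\[
\frac{\prod_{j=-N+1}^{0}|P_n(t_0+j)|}{\prod_{j=0}^{N}|P_n(t_0+j)|},
\]
because the $Q$-factors contribute $|d_1|^{N+1}/|d_1|^{N}=|d_1|$.

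The strategy now splits according to whether $a$ is rational.
\begin{enumerate}
\item If $a\notin\Q$, Lemma~\ref{Sec6} bounds this ratio below uniformly on good sets $\mathcal P_n(N)$ with $N\in\mathcal N(a)$. The same limsup argument as in Subsection~\ref{subsec:3.2} produces $t_0\in S$ lying in infinitely many $\mathcal P_n(N)$. The product $|g(t_0+N)|\,|g(t_0-N)|$ is then bounded below along that subsequence, contradicting \eqref{eq:L2decay}.
\item If $a\in\Q$, then $|P_n|$ is periodic with an integer period $q$. The ratio telescopes to a bounded positive quantity, exactly as with $T(t)$ in Subsection~\ref{subsec:3.2}, and again contradicts decay.
\end{enumerate}

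Alternatively, the whole of Part (2) is a degenerate configuration: $\{(0,k):k\in F\}\cup\{(a,b)\}$ is a set of collinear points plus one extra point, an $(m,1)$ configuration. This case is covered by \cite{heil1996linear} (and by Linnell when it is lattice-contained), so one could simply cite it. I would present the direct argument above and remark on this reference.

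The main obstacle I expect is Part (2) for irrational $a$. There one must make sure that the good sets of Lemma~\ref{Sec6} meet $S$ along a common infinite subsequence, and that the boundary factors ($j=0$ versus $j=1$, and the shift by one index) stay bounded away from zero. The latter holds because $S+\Z$ avoids the zeros of $P_n$.
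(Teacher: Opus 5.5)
Your proof is correct and follows the paper's argument. With $d_2=0\neq d_1$ the $Q$-ratio in \eqref{eq:4.1.2backfor} is the constant $|d_1|$, Lemma~\ref{Sec6} bounds the $P_n$-ratio from below, and the limsup selection of $t_0$ from Subsection~\ref{subsec:3.2} contradicts \eqref{eq:L2decay}; the only $N$-dependent boundary factor, $|P_n(t_0-N)|$, lands in the denominator, so it needs just the trivial upper bound $\|P_n\|_\infty$, not a lower bound. Your separate periodicity argument for $a\in\Q$ is more careful than the paper, which invokes Lemma~\ref{Sec6} even though that lemma assumes $a$ irrational, and your fallback citation works unconditionally: $\{(0,ka):k\in F\}\cup\{(1,\pm v)\}\subset A\Z^2$ with $A=\begin{bmatrix}1&0\\ \pm v&a\end{bmatrix}$, so Linnell's theorem settles Part (2) directly.
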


\begin{proof}
    (1) is immediate. For (2), say $d_2=0\neq d_1$, so $|Q|\equiv|d_1|$ and the
    $Q$-ratio in \eqref{eq:4.1.2backfor} equals $|d_1|$ for every $N$, with no
    condition on $ab$. By (1), $P_n\not\equiv0$, so Lemma~\ref{Sec6} bounds the
    $P_n$-ratio below; running the argument of Section~\ref{subsec:3.2} gives
    $|g(t_0+N)||g(t_0-N)|\geq C_2|g(t_0-1)|^2 |d_1|>0$, contradicting
    \eqref{eq:L2decay}.
\end{proof}

By Lemma~\ref{lem:vanish}, we may assume $P_n\not\equiv0$ and
$d_1d_2\neq0$.

\subsection{Proof of Theorem~\ref{main1}}

\begin{proof}[Proof of Theorem~\ref{main1}]
    We may assume $g\neq 0$ and $a,b>0$, and by Lemma~\ref{lem:vanish} that
    $P_n\not\equiv0$, $d_1d_2\neq0$. Four cases cover all $a,b\neq0$.

    \underline{(1) $b=\tfrac pq\in\Q$.} With $A=\begin{bmatrix}a&0\\0&1/q\end{bmatrix}$
    (full-rank since $a\neq0$) one has $(0,k)=A(0,kq)^T$ and $(a,\pm b)=A(1,\pm p)^T$,
    so $\Lambda_n\subset A\Z^2$ and Conjecture~\ref{HRTConjecture} holds by
    \cite{linnell1999neumann}. This uses only $a\neq0$ and holds for every $n$.

    \underline{(2) $a\notin\Q$, $ab=v\in\Q$.} Since $Q$ is unchanged from
    \eqref{eq:ch3_relation}, the argument of Section~\ref{subsec:3.2} applies
    verbatim with $P_n$ in place of $P$ and Lemma~\ref{Sec6} in place of
    Lemma~\ref{Sec3}: the $Q$-ratio telescopes to a constant involves $|Q(t_0 \pm j)|$ for some bounded $j$ and the
    $P_n$-ratio is bounded below, contradicting \eqref{eq:L2decay}.

    \underline{(3) $a\notin\Q$, $ab=v\notin\Q$.} Likewise the argument of
    Section~\ref{subsec:3.3} applies with $P_n$ for $P$: the construction of
    $t_0,t_1,m,K$ uses only $Q$, and the $P_n$-ratio in \eqref{thm2final} is
    bounded below by Lemma~\ref{Sec6}, again contradicting \eqref{eq:L2decay}.

    \underline{(4) $a=\tfrac{p}{q}\in\Q$, $b\notin\Q$.} Now the roles reverse. Since
    $e^{2\pi ikaq}=1$, $|P_n|$ is $q$-periodic, so
    $V(t):=\prod_{j=1}^{q}|P_n(t-j)|=\prod_{j=1}^{q}|P_n(t+j)|$ is positive and
    finite on $S+\Z$. Meanwhile $|Q(t)|=|d_1|\,|e^{2\pi i2vt}-\rho|$ with
    $\rho=-d_2/d_1\neq0$ and $2v\notin\Q$, so Lemma~\ref{Sec6} applies to $Q$
    (one root, with $2v$ for $a$): there are $C_1>0$ and, for each
    $N\in\mathcal N(2v)$, a set $\mathcal Q(N)$ of measure $\geq 1-\epsilon$ with
    \begin{equation}\label{eq:Qratio}
        \frac{\prod_{j=-N}^{-1}|Q(t_0+j)|}{\prod_{j=1}^{N}|Q(t_0+j)|}\leq C_1,
        \qquad t_0\in\mathcal Q(N),
    \end{equation}
    where we take $0<2v<1$ (replacing $2v$ by its fractional part only rotates
    $\rho$).

 Fix $0<\epsilon<|S|/2$, and enumerate $\mathcal N(2v)=\{N_\ell:\ell\geq1\}.$

For every $\ell$,
$$
|S\cap\mathcal Q(N_\ell)|
\geq |S|-\epsilon>0.$$

Since $[0,1]$ has finite measure, we have 

$$\left|
\limsup_{\ell\to\infty}
\bigl(S\cap\mathcal Q(N_\ell)\bigr)
\right|
\geq
\limsup_{\ell\to\infty}|S\cap\mathcal Q(N_\ell)|
\geq |S|-\epsilon>0.$$

Consequently, there exist $t_0\in S$ and an infinite set $L\subset\mathbb N$
such that
$$
t_0\in\mathcal Q(N_\ell),\qquad \ell\in L.$$

Passing to a further infinite subset, we may assume that there is a fixed
$r\in \{0,\ldots,q-1\}$ such that
$$
N_\ell\equiv r\pmod q,\qquad \ell\in L.$$

Write $N_\ell=k_\ell q+r$.

Iterating ~\eqref{eq:ch4_ab_modulus} forward gives~\eqref{eq:4.1.2forward}, while iterating backward from $t_0-1$ gives

\begin{equation}\label{eq:ch6_backward}
|g(t_0-N_\ell-1)|
=
|g(t_0-1)|
\frac{\prod_{j=-N_\ell}^{-1}|P_n(t_0+j)|}
{\prod_{j=-N_\ell}^{-1}|Q(t_0+j)|}.
\end{equation}
Multiplying the two identities, we obtain

\begin{align*}
|g(t_0+N_\ell)|\,|g(t_0-N_\ell-1)|
&=
|g(t_0-1)|^2
\frac{\prod_{j=-N_\ell}^{-1}|P_n(t_0+j)|}
{\prod_{j=0}^{N_\ell}|P_n(t_0+j)|} 
\frac{\prod_{j=0}^{N_\ell}|Q(t_0+j)|}
{\prod_{j=-N_\ell}^{-1}|Q(t_0+j)|}.
\end{align*}

Since $|P_n|$ is $q$-periodic and $N_\ell=k_\ell q+r$,
$$
\frac{\prod_{j=-N_\ell}^{-1}|P_n(t_0+j)|}
{\prod_{j=0}^{N_\ell}|P_n(t_0+j)|}
=
\frac{\prod_{j=-r}^{-1}|P_n(t_0+j)|}
{\prod_{j=0}^{r}|P_n(t_0+j)|}
=:D_r(t_0)>0,
$$

where an empty product is understood to equal $1$. Notice that $D_r(t_0)$
is independent of $\ell$.

Moreover, since $t_0\in\mathcal Q(N_\ell)$,
\eqref{eq:Qratio} implies

$$
\frac{\prod_{j=0}^{N_\ell}|Q(t_0+j)|}
{\prod_{j=-N_\ell}^{-1}|Q(t_0+j)|}
=
|Q(t_0)|
\frac{\prod_{j=1}^{N_\ell}|Q(t_0+j)|}
{\prod_{j=-N_\ell}^{-1}|Q(t_0+j)|}
\geq
\frac{|Q(t_0)|}{C_1}.
$$

Therefore,
$$
|g(t_0+N_\ell)||g(t_0-N_\ell-1)|
\geq
|g(t_0-1)|^2D_r(t_0)\frac{|Q(t_0)|}{C_1}>0
$$
for every $\ell\in L$. The right-hand side is independent of $\ell$, whereas~\eqref{eq:L2decay} implies that the left-hand side tends to zero as
$\ell\to\infty$. This contradiction completes Case~(4).

Finally, the subset form is immediate. For a nonempty
$F\subset\{-n,\ldots,n\}$, the relation is \eqref{eq:ch6_relation} with
$c_k=0$ for $k\notin F$. Since neither Lemma~\ref{Sec6} nor
Lemma~\ref{lem:vanish} requires all the coefficients $c_k$ to be nonzero,
the preceding four cases apply without modification.

\end{proof}

The results of this paper show that symmetry may be exploited through an
algebraic factorization of the trigonometric polynomial associated with the
configuration. Once such a factorization is available, the product estimates
of Demeter and Zaharescu can be applied factor by factor, yielding the HRT
conjecture for broad families of symmetric configurations. The principal
obstacle to extending this approach to arbitrary $(m,2)$ configurations is
that the corresponding trigonometric polynomial no longer admits such a
factorization. Whether other geometric classes admit comparable
factorizations remains an intriguing question.

\section*{Acknowledgements}
This work was partially supported by the National Science Foundation, grants DMS-2205771 and DMS-2309652. 

AI-based language tools were used for grammar, spelling, and stylistic editing. All mathematical content, proofs, and conclusions are the responsibility of the authors.

\bibliographystyle{amsplain}
\bibliography{reference.bib}
\end{document}